\newtheorem{theorem}{Theorem}[section]
\newtheorem{lemma}[theorem]{Lemma}
\newtheorem{prop}[theorem]{Proposition}
\newtheorem{cor}[theorem]{Corollary}
\theoremstyle{remark}
\newtheorem{remark}[theorem]{Remark}
\theoremstyle{definition}
\newtheorem{define}[theorem]{Definition}
\renewcommand\P{\mathbb{P}}
\newcommand\Z{\mathbb{Z}}
\newcommand\Q{\mathbb{Q}}
\newcommand\R{\mathbb{R}}
\newcommand\Qbar{\overline{\mathbb{Q}}}
\renewcommand\O{\mathcal{O}}
\newcommand\m{\mathfrak{m}}
\newcommand\n{\mathfrak{n}}
\newcommand\p{\mathfrak{p}}
\newcommand\Aut{\mathop{\rm Aut} \nolimits}
\newcommand\Pic{\mathop{\rm Pic} \nolimits}
\newcommand\main[1]{
\begin{theorem}\label{#1}
Let $a, b, c, d \in \Q^*$ be nonzero rational numbers with $abcd$ square.  
Let $P = (x_0:y_0:z_0:w_0)$ be
a rational point on $V_{a,b,c,d}$,
and suppose that $x_0 y_0 z_0 w_0 \ne 0$ and that $P$ does not
lie on any of the $48$ lines of the surface.  Then the set of 
rational points of the surface is dense in both the Zariski and the real 
analytic topology.
\end{theorem}
}
\newcommand\second[1]{
\begin{theorem}\label{#1}
Let $a, b, c, d \in \Q^*$ be nonzero rational numbers 
with $abcd$ square and $a+b+c+d=0$. 
Assume that no two of $a,b,c,d$ sum to $0$. Then the set of 
rational points of the surface $V_{a,b,c,d}$ is dense in both 
the Zariski and the real analytic topology.
\end{theorem}
}
\title{Density of rational points on diagonal quartic surfaces}
\author{Adam Logan, David McKinnon, Ronald van Luijk}
\renewcommand\footnote[1]{}
\begin{document}

\begin{abstract}
Let $a,b,c,d$ be nonzero rational numbers whose product is 
a square, and let $V$ be the diagonal quartic surface in $\P^3$ 
defined by $ax^4+by^4+cz^4+dw^4=0$. We prove that if $V$ 
contains a rational point that does not lie on any of the $48$
lines on $V$ or on any of the coordinate planes, then the set
of rational points on $V$ is dense in both the Zariski topology 
and the real analytic topology.
\end{abstract}

\maketitle

\section{Introduction}
\def\ct{Colliot-Th\'el\`ene}
This paper 
\footnote{please read introduction carefully again.}
is about the arithmetic of diagonal quartic surfaces, which are
the surfaces $V_{a,b,c,d} \subset \P^3$ defined by the equation
$ax^4 + by^4 + cz^4 + dw^4 = 0$ for nonzero $a,b,c,d\in \Q$. 
We will prove the following theorem.

\main{main}

We will also prove a generalization to arbitrary number fields of 
a weaker version of Theorem \ref{main}. 
An easy consequence of  Theorem \ref{main} is the following.

\second{second}

The surfaces $V_{a,b,c,d}$ are smooth quartic
surfaces, which means that they are K3 surfaces.  
One of the most important open problems in the arithmetic of K3
surfaces is to determine whether there is a K3 surface over a number
field on which the set of rational points is neither empty nor Zariski
dense.  Theorem \ref{main} shows that a diagonal quartic surface
over $\Q$ for which the product of the coefficients is a square does not
have this property, unless all its rational points lie on the union of 
its $48$ lines and the coordinate planes. 
However, no such diagonal surface is known 
and the authors believe that the condition in Theorem \ref{main} that 
$P$ not lie on one of 
the $48$ lines or on one of the coordinate planes may not be necessary.

Noam Elkies \cite{elkies} proved that the set of 
$\Q$-rational points on $V_{1,1,1,-1}$ is dense in both the 
Zariski topology and the real analytic topology.   
Martin Bright \cite{bright} has exhibited a Brauer-Manin obstruction
to the existence of rational points on many examples. 
Sir Peter Swinnerton-Dyer in his paper \cite {SDdiag} assumes like us 
that $abcd$ is a square. He uses one of the 
two elliptic fibrations that exist in this case to show that 
under certain specific conditions on the coefficients, 
$V_{a,b,c,d}$ does not satisfy the Hasse principle, while under some 
other hypotheses, including Schinzel's hypothesis and the assumption 
that Tate-Schafarevich groups of elliptic curves are finite, the
Hasse principle is satisfied. In particular, assuming these two big conjectures, 
it follows immediately from his work that if $abcd$ 
is a square but not a fourth power and no product of two coefficients 
or their negatives is a square and there is no Brauer-Manin obstruction 
to the Hasse principle, then the set of rational points is Zariski dense;
the last hypothesis is obviously satisfied when $V_{a,b,c,d}(\Q)$ is 
nonempty. By Theorem \ref{main} the fact that the set $V_{a,b,c,d}(\Q)$ is 
nonempty indeed implies that it is Zariski dense, independent from 
Schinzel's hypothesis and the assumption that Tate-Schafarevich 
groups of elliptic curves are finite, provided that
we assume instead the existence of a rational point that does not 
lie on any of the $48$ lines or any of the coordinate planes. 

Jean-Louis \ct{}, Alexei Skorobogatov, and Sir Peter Swinnerton-Dyer 
\cite{ctskorswin} also use Schinzel's hypothesis and finiteness 
of Tate-Schafarevich groups
to show that over arbitrary number fields, on semistable elliptic 
fibrations 
satisfying certain technical conditions, the Brauer-Manin obstruction
coming from the vertical Brauer group is the only obstruction to the 
Hasse principle; furthermore, that
if such a fibration contains a rational point, then its set 
of rational points is Zariski dense. 
Olivier Wittenberg \cite{wit} generalizes their theory to the extent that 
Sir Peter Swin\-nerton-Dyer's aforementioned result over the rational numbers 
becomes a special case of this more general setting, thus extending the
result to arbitrary number fields.

Jean-Louis \ct{}\footnote{ask for reference} pointed out Richmond's 
method \cite{richmond} 
to the authors that takes a rational point $P$ on $V=V_{a,b,c,d}$
to construct two new points over $\Q(\sqrt{abcd})$.
Each of these two points is the unique last point of intersection 
between $V$ and one of the two tangent lines to the singular node in 
the intersection between $V$ and the tangent plane to $V$ at $P$.
In this paper we reinterpret this construction to study the 
arithmetic of the surface $V$.

In the next section,
we exhibit two endomorphisms $e_1$ and $e_2$ of $V_{a,b,c,d}$
such that $e_1(P)$ and $e_2(P)$ are the two points given by Richmond's 
construction. The diagonal surfaces have two elliptic fibrations and 
each fibration is fixed by one of the two
endomorphisms.  Thus, if $e_i$ is one of the endomorphisms and $P$ is a rational
point on the surface, we will consider the fibre $C_i$ of the fibration
fixed by the endomorphism $e_i$ that passes through $P$.  This is a curve,
so we can study the divisor $(e_i(P)) - (P)$ on it.
Subject to the hypotheses, we will see
that it is almost never a torsion divisor, and hence that fibres with rational
points tend to have positive rank.  

Our results are very much in the spirit of the potential density
results of Bogomolov and Tschinkel \cite{elliptic, ellipticK3}, and Harris and
Tschinkel \cite{quartic}.  These papers describe a variety of techniques
for proving density and potential density of rational points on a
variety of surfaces, including in particular the diagonal quartic
surfaces we consider in this paper.  Our results improve on these only
in that we strengthen the conclusion of the potential density results
to actual density, and that we weaken the hypotheses on the density
results to demanding only a single rational point satisfying a weak
genericity condition.  For an excellent overview of techniques used to
prove density and potential density of rational points on algebraic
varieties, please see Brendan Hassett's survey \cite{hassett}.

We thank Jean-Louis \ct, Samir Siksek, Andrew Granville, and Igor Dolgachev for useful discussions.

\section{The elliptic fibrations and
endomorphisms}

We begin by introducing some notation.

\begin{define}
For $a,b,c,d \in \Q^*$ we let $V_{a,b,c,d}$ be the surface in 
$\P^3$ given by $ax^4+by^4+cz^4+dw^4=0$. Set $V_0 = V_{1,1,1,1}$ and 
$V_0' = V_{1,1,-1,-1}$.  Let $\tau \colon \P^3 
\rightarrow \P^3$ be the map that squares all four coordinates.
Set $Q_{a,b,c,d} = \tau(V_{a,b,c,d})$.
\end{define}

Suppose $a,b,c,d \in \Q^*$ with $abcd\in (\Q^*)^2$ and write 
$V$ and $Q$ for $V_{a,b,c,d}$ and $Q_{a,b,c,d}$ respectively.
Suppose that $V$ has a rational point.
Then $Q$, which is a nonsingular quadric surface defined by
$ax^2+by^2+cz^2+dw^2=0$, also contains 
a rational point.  Since $abcd$ is a square, the
two rulings on $Q$ are defined over $\Q$.

\begin{define} Fix a rational point $R$ on $Q$, and decompose the 
intersection of $Q$ with the tangent plane to $Q$ at $R$ into two lines $l_1$, $l_2$.
Let $\pi_1, \pi_2 \colon Q \rightarrow \P^1$
be two rulings on $Q$ such that $l_i$ is a fibre of $\pi_i$.
For $i = 1, 2$, set $f_i = \pi_i \circ \tau \colon V \rightarrow \P^1$.
\end{define}

Our description of the two rulings shows that they can be defined over $\Q$. 
However, the two rulings do not depend on $R$ in the following sense.
Let $R'$ be another rational point on $Q$. Then by the same construction we obtain 
two rulings, which we can number $\pi_1', \pi_2' \colon Q \rightarrow \P^1$, 
such that for each $i$ the maps $\pi_i$ and $\pi_i'$ coincide up to a 
linear automorphism of $\P^1$. 

Any two linear forms defining $l_i$ define a map to $\P^1$ equal to
$\pi_j$ up to a linear transformation of $\P^1$, with $i \neq j$.
Using additional lines in the same family, we can obtain alternative
equations for the rulings and remove the base locus. The $f_i$ are
elliptic fibrations on $V$, also well-defined up to an automorphism of $\P^1$. 

\begin{define}\label{iota}
Fix fourth roots of $a,b,c,d$.  Let $\iota_{a,b,c,d}$ be the
$\Qbar$-isomorphism $V_{a,b,c,d} \rightarrow V_0$ defined by $(\root 4
\of a x: \root 4 \of b y: \root 4 \of c z: \root 4 \of d w)$.  Fix
fourth roots of $1$ and $-1$, and let $\iota'_{a,b,c,d}$ be the
$\Qbar$-isomorphism $V_{a,b,c,d} \rightarrow V_0'$ defined by
${\iota_{1,1,-1,-1}}^{-1} \circ \iota_{a,b,c,d}$.
\end{define}

Note that the fibrations $f_i$ of $V=V_{a,b,c,d}$ were constructed 
geometrically. Therefore, the fibrations of $V_{a,b,c,d}$ coincide 
with those of $V_{1,1,1,1}$ up to composition with $\iota_{a,b,c,d}$
and a linear automorphism of $\P^1$.
 
When we study the geometric properties of diagonal quartics, it
suffices to consider $V_0$ or $V_0'$.  It is only when we consider the
arithmetic properties that we need to allow the coefficients to vary.
While some formulas are more symmetrical on $V_0$, some things are
defined over $\Q$ for $V_0'$ that are not for $V_0$.  For example, the
two elliptic fibrations on $V_0'$ are defined by $(x^2-z^2:y^2-w^2)$
and $(x^2-w^2:y^2-z^2)$, whereas on $V_0$ they can only be described
over $\Q$ as maps to a curve isomorphic to the conic $x^2+y^2+z^2 =
0$.  To give a fibration of $V_0$ over $\P^1$ requires changing base
to a field over which this conic has a point.

\begin{define}
Let $\mu$ denote the group of automorphisms of $\P^3$ that multiply
each coordinate by a fourth root of unity, and let $S_4$ act on the
coordinates of $\P^3$.  We will regard $\mu$ as inducing a subgroup of
$\Aut V$.  Any permutation $\pi \in S_4$ induces an isomorphism from
$V_{a,b,c,d}$ to $V_{a',b',c',d'}$, where $(a',b',c',d')$ is the
appropriate permutation of $(a,b,c,d)$.
\end{define}

\begin{define}
Let $G$ be the semidirect product
$\mu \rtimes S_4$, with the obvious action of $S_4$ on $\mu$.
We will view $G$
as a subgroup of $\Aut V_0$, and through conjugation with $\iota_{a,b,c,d}$
also as a subgroup of $\Aut V_{a,b,c,d}$.
\end{define}

Note that when $G$ is viewed as acting on $V_0$, the elements of $S_4$
correspond to $\Q$-automorphisms of $V_0$; this is not the case when
$G$ is considered as acting on a general $V_{a,b,c,d}$.

The surface $V$ contains exactly $48$ lines, on which $G$ acts transitively. 
On $V_0'$ one of these lines is given by $x=z$ and $y=w$. 

\begin{define}
Let $G_0$ denote the index-$2$ subgroup of $G$ that fixes the fibrations $f_i$ (up to an 
automorphism of $\P^1$).
\end{define}

The group $G_0$ partitions the $48$ lines into two orbits 
$\Lambda_i$ of size $24$ (with $i=1,2$), where $\Lambda_i$ consists of the 
irreducible components of the $6$ singular fibres of $f_i$, each being of type $I_4$
\cite[page 517]{SDdiag}. The singular points of the fibres are 
exactly the $24$ points with two coordinates zero, and each 
of these points is singular on its fibre in both fibrations. 

\begin{define}Let $\Omega$ denote the set of these $24$ points, and let 
$U$ be the complement of $\Omega$ in $V$.
\end{define}

We will see that the ``tangents'' to the node at $P$ that we described in 
the introduction can be characterized in a different manner as well, 
namely as the tangents to the fibres of the $f_i$ through $P$.
We first show that these tangents do not interfere too much with 
the singular fibres.

\begin{lemma}\label{nointerference}
Fix a point $P \in U(\Qbar)$, and $i,j$ such that $\{i,j\} = \{1,2\}$. 
For $k=1,2$, let $C_k$ be the fibre of $f_k$ through $P$, and let $M$ 
be the tangent line to $C_j$ at $P$. Then $M$ is not contained in $C_i$.
\end{lemma}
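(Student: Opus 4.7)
The plan is to argue by contradiction: suppose $M \subset C_i$. First I would rule out the case where $C_i$ is smooth; in that case $C_i$ is an irreducible curve of arithmetic genus $1$, and containing the rational curve $M$ would force $M = C_i$, which is absurd. Hence $C_i$ is a singular fibre of type $I_4$, and $M$ is one of its four components; in particular $M \in \Lambda_i$, and since $\Lambda_1 \cap \Lambda_2 = \emptyset$ one also has $M \notin \Lambda_j$. Consequently $f_j|_M\colon M \to \P^1$ is a nonconstant morphism; write $d = M \cdot F_j$ for its degree, where $F_j$ denotes the class of a fibre of $f_j$.

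The next step is to pin down $d$ and to locate the ramification of $f_j|_M$. Because $G_0$ acts transitively on $\Lambda_i$ and preserves the class $F_j$, the integer $d$ is independent of the choice of $M \in \Lambda_i$, and a single direct computation on $V_0'$ with any explicit representative line yields $d = 2$. Next, at any point $Q \in \Omega$, the fibre of $f_j$ through $Q$ is a singular $I_4$ with a node at $Q$, so in local analytic coordinates on the smooth surface $V$ this fibre is cut out by $uv = 0$; hence $df_j|_Q = 0$, and so $f_j|_M$ is ramified at every point of $M \cap \Omega$.

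Finally, as a component of the $I_4$ fibre $C_i$, the line $M$ meets its two neighbouring components in two distinct nodes of $C_i$, and both of these nodes lie in $\Omega$. Thus $f_j|_M$ is ramified at both of these points; but by Riemann--Hurwitz a degree-$2$ map $\P^1 \to \P^1$ has total ramification $2d - 2 = 2$, so these two points account for the entire ramification divisor and $f_j|_M$ is \'etale elsewhere. Since $P \in U$ is disjoint from $\Omega$, the fibre $C_j$ meets $M$ transversally at $P$, contradicting the assumption that $M$ is the tangent line to $C_j$ at $P$. The main obstacle is the numerical identification $d = 2$, which uses the specific geometry of diagonal quartics; once it is in hand, the Riemann--Hurwitz bookkeeping together with the local node model forces the conclusion.
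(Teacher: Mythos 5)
Your proof is correct, and it takes a genuinely different route from the paper's. The paper normalizes to $V=V_0'$, uses the transitive $G$-action on the $48$ lines to put $M$ in the form $x=z$, $y=w$ (so $P=(s:t:s:t)$), writes down explicit equations for $C_j$ and for its tangent line at $P$, and checks by direct linear algebra that this tangent line cannot contain $M$ unless $st=0$. You instead argue via intersection theory on the fibration: once $M\subset C_i$ forces $M\in\Lambda_i$, the key inputs are $\deg(f_j|_M)=M\cdot F_j=2$ and the vanishing of $df_j$ at the nodes of the $I_4$ fibres, after which Riemann--Hurwitz shows that the two nodes of $C_i$ lying on $M$ exhaust the ramification of $f_j|_M$, so $M$ meets every fibre of $f_j$ transversally away from $\Omega$, contradicting tangency at $P\in U$. (Incidentally, your one numerical input $d=2$ needs no computation at all: since $C_1\cup C_2$ is cut out on $V$ by the quadric $A=\tau^{-1}(T_R)$, one has $F_i+F_j\sim 2H$, while $M\cdot H=1$ and $M\cdot F_i=0$ because $M$ is a fibre component of $f_i$.) Your version is more conceptual and proves the stronger statement that no line of $\Lambda_i$ is tangent to any fibre of $f_j$ outside $\Omega$ --- it also quietly disposes of the case where $C_j$ is itself singular at points other than $P$, since then $M$ would lie in $\Lambda_j$, which is disjoint from $\Lambda_i$; what it costs is the appeal to the $I_4$ structure of the singular fibres and to Riemann--Hurwitz, where the paper's normalization-plus-linear-algebra argument is entirely self-contained.
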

\begin{proof}
Since this statement is completely geometric, we assume $V=V_0'$ without 
loss of generality. Note that $M$ is well-defined because $C_j$ is smooth 
at $P$. 
Suppose $M$ is contained in $C_i$. Then $M$ is one of the $48$ lines.
After acting on $V$ by an appropriate element of $G$, the line $M$ is given by 
$x=z$ and $y=w$, so there are $s,t \in \Qbar$ such that $P = (s:t:s:t)$.
Since $M$ is contained in the fibre above $(0:1)$ of the fibration that 
sends $(x:y:z:w)$ to $(x^2-z^2:w^2+y^2)$, the curve $C_j$ is a fibre of 
the other fibration, so $f_j$ can be given by $(x^2+z^2:w^2+y^2)$, or 
equivalently $(w^2-y^2:x^2-z^2)$. 
Since $f_j(P) = (s^2:t^2)$, we conclude that $C_j$ is given by 
$s^2(w^2+y^2) = t^2(x^2+z^2)$ and $t^2(w^2-y^2)=s^2(x^2-z^2)$. 
The tangent line to $C_j$ at $P$ is therefore also given by 
$s^2t(w+y) = t^2s(x+z)$ and $t^3(w-y) = s^3(x-z)$. Simple linear 
algebra shows that this does not contain $M$ unless $st=0$.
This contradicts the assumption that
$P\in U$, which shows that $M$ is not contained in $C_i$. 
\end{proof}

The following proposition is fundamental to our work and 
shows how the case of diagonal $V$ is special.

\begin{prop}\label{essence}
Fix a point $P \in U(\Qbar)$ and set $R = \tau(P)$. Let 
$T_R$ denote the tangent space to $Q$ at $R$, and set 
$A = \tau^{-1}(T_R)$. Fix $i \in \{1,2\}$, and 
let $C_i$ be the fibre of $f_i$ through $P$.
Let $M$ denote the 
tangent line to $C_i$ at $P$. Then $M$ is contained in $A$. 
Furthermore, let $T_P$ denote the tangent plane to $V$ at $P$.
If $C_i$ is irreducible, then the intersection multiplicities 
$(M \cdot (T_P \cap V))_P$  and $(T_P \cdot C_i)_P$ are at least $3$. 
%
%
\end{prop}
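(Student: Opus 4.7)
The plan is to exploit the identification $A \cap V = C_1 \cup C_2$ together with the coincidence of tangent planes $T_P A = T_P V$. Writing $P = (x_0:y_0:z_0:w_0)$ and $R = \tau(P) = (x_0^2:y_0^2:z_0^2:w_0^2)$, the plane $T_R$ is defined by $ax_0^2 X + by_0^2 Y + cz_0^2 Z + dw_0^2 W = 0$, so $A$ is the quadric cut out by $F := ax_0^2 x^2 + by_0^2 y^2 + cz_0^2 z^2 + dw_0^2 w^2$. The gradients of $F$ and of the defining form $G := ax^4+by^4+cz^4+dw^4$ of $V$ at $P$ are each proportional to $(ax_0^3, by_0^3, cz_0^3, dw_0^3)$, so $T_P A = T_P V$. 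Since $V = \tau^{-1}(Q)$, we also have $A \cap V = \tau^{-1}(T_R \cap Q) = \tau^{-1}(l_1) \cup \tau^{-1}(l_2) = C_1 \cup C_2$.

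For the inclusion $M \subset A$, let $\vec v = (v_1, v_2, v_3, v_4)$ be a tangent vector to $C_i$ at $P$. Because $F$ is a quadratic form and $B_F(P,\vec v) = 0$ holds automatically from $\vec v \in T_P A$, the expansion $F(P + t\vec v) = F(P) + 2tB_F(P,\vec v) + t^2 F(\vec v)$ shows that $M \subset A$ is equivalent to $F(\vec v) = 0$. When all of $x_0, y_0, z_0, w_0$ are nonzero, $d\tau_P$ is invertible and sends $\vec v$ to a scalar multiple of a tangent vector $\vec u = (u_1, u_2, u_3, u_4)$ of $l_i \subset Q$ at $R$, with $u_j$ equal to $2$ times the $j$th coordinate of $P$ times $v_j$. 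Since $l_i \subset Q$, one has $au_1^2 + bu_2^2 + cu_3^2 + du_4^2 = 0$, which unravels to $4F(\vec v) = 0$. The degenerate case where some coordinate of $P$ vanishes, so that $A$ is a quadric cone with vertex disjoint from $U$, I would handle by direct verification using the explicit forms of $F$ and $V$.

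For the intersection multiplicities I would Taylor-expand along local parametrizations. Parametrize $C_i$ near $P$ as $P(t) = P + t\vec v + (t^2/2)\vec w + O(t^3)$; expanding $F(P(t)) \equiv 0$ to order $t^2$ and using $F(\vec v) = 0$ yields $B_F(P,\vec w) = 0$, i.e., $\vec w \in T_P A = T_P$. Hence the linear form defining $T_P$ vanishes to order at least $3$ on $P(t)$, giving $(T_P \cdot C_i)_P \geq 3$. For the other multiplicity, parametrize $M$ as $P + t\vec v$ and expand $G(P + t\vec v)$: the Hessian of $G$ at $P$ is the diagonal matrix with entries $12ax_0^2, 12by_0^2, 12cz_0^2, 12dw_0^2$, so the coefficient of $t^2$ is $6 F(\vec v) = 0$; together with $G(P) = 0$ and $\vec v \in T_P V$ this gives $G(P + t\vec v) = O(t^3)$, and since $M \subset T_P$ this coincides with $(M \cdot (T_P \cap V))_P \geq 3$.

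The principal obstacle is the ramified case in which some coordinate of $P$ is zero: $d\tau_P$ acquires a kernel and one cannot simply transport tangent vectors between $Q$ and $V$. Here $A$ becomes a quadric cone, and one must check directly that the tangent to $C_i$ at $P$ coincides with the unique ruling of $A$ through the smooth point $P$. The irreducibility hypothesis on $C_i$ enters to ensure that $P$ is a smooth point of $C_i$ with a well-defined tangent line $M$ meeting $V$ in a finite scheme, rather than $M$ being one of the line components of a reducible fibre.
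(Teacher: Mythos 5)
Your argument is correct and reaches the paper's conclusions by a genuinely different route. The paper works entirely in the local ring at $P$: it records the quadratic approximations $q_A\equiv 2l+g$ and $q_V\equiv 4l+6g \pmod{\m^3}$ (the same proportionality of Hessians that powers your computation), writes $q_V=q_1q_2+q_Ar$ with $r\equiv 2\pmod{\m}$ using $A\cap V=C_1\cup C_2$, and deduces $M\subset A$ \emph{indirectly}: restricting to $M$ forces $\overline{q_A}\in\n^3$, which is impossible for a proper intersection of a line with a quadric. You instead obtain the key identity $F(\vec v)=0$ directly, by pushing the tangent vector forward under $\tau$ to a tangent vector of the line $l_i$ lying on the quadric $Q$; this is more transparent and makes geometrically visible why the diagonal shape matters, and your Taylor expansions for the two multiplicities are then clean and correct (provided one also notes $M\not\subset V$ and $C_i\not\subset T_P$ so that both intersections are proper --- this, via Lemma \ref{nointerference}, is the real role of the irreducibility hypothesis, not the smoothness of $C_i$ at $P$, which already follows from $P\in U$). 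The one place where the paper's uniform local-ring argument buys something is the ramification locus of $\tau$: when a coordinate of $P$ vanishes your transport of tangent vectors breaks down, and you explicitly defer this case rather than prove it. It does work out along the lines you indicate: if, say, $w_0=0$, then both diagonal quadrics cutting out $C_i=\tau^{-1}(L_i)$ have gradient at $P$ with vanishing last entry, so the vertex direction $(0,0,0,1)$ lies in $T_PC_i$ and $M$ is forced to be the ruling of the cone $A$ through $P$, on which $F$ visibly vanishes (and $F(\vec v)=0$ then feeds into your multiplicity computations unchanged); but as written this case is asserted, not proved, and should be filled in.
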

\begin{proof}
%
%
Note that $C_i = \tau^{-1}(L_i)$, where $L_1$ and $L_2$ are the lines 
in $T_R \cap Q$, so we have 
$C_1 \cup C_2 = \tau^{-1}(T_R \cap Q) = \tau^{-1}(T_R) \cap \tau^{-1}(Q) = A \cap V$. 
By the assumption $P \in U$, the curve $C_i$ is smooth at $P$, so 
$M$ is well-defined. Without loss of generality, we assume that $P$ 
is contained in the affine part $w=1$, given by $P=(x_0,y_0,z_0)$. 
Since the statement of the lemma is completely geometric, we 
may assume that $Q$ is given by $x^2+y^2+z^2+1=0$, so that $V$ is 
given by $q_V=0$ with $q_V = x^4+y^4+z^4+1$, and $A$ by $q_A=0$ 
with $q_A = x_0^2x^2+y_0^2y^2+z_0^2z^2+1$. Note that at most one of
the coefficients of the equation defining $A$ is $0$, so $A$ is 
irreducible and smooth at $P$. The common tangent space $T_P$
to $V$ and $A$ at $P$ is given by $l=0$, where
$$l=x_0^3(x-x_0)+y_0^3(y-y_0)+z_0^3(z-z_0) = 
x_0^3x+y_0^3y+z_0^3z+1.$$ 

It turns out that since $Q$ is diagonal, 
the surfaces $A$ and $V$ are more similar locally at $P$ than is
implied by the fact that they share 
a tangent space. Let $\O_{\P^3,P}$ and $\m$ be the local ring 
of $P$ in $\P^3$ and its maximal ideal. 
Set $g=x_0^2(x-x_0)^2+y_0^2(y-y_0)^2+z_0^2(z-z_0)^2 \in \m^2$. 
Then the quadratic approximations of $q_A$ and $q_V$ are 
$q_A\equiv 2l+g {\mod {\m^3}}$ and 
$q_V\equiv 4l+6g {\mod {\m^3}}$. Let $q_1,q_2\in k[x,y,z]$ be 
quadrics such that 
$C_i$ is given on $A$ by $q_i=0$. 
From $C_1 \cup C_2 = V \cap A$ we conclude that 
$q_V \equiv cq_1q_2 {\mod q_A}$ for some nonzero constant $c$.
Replacing $q_1$ by $cq_1$, we find that
there exists a quadric $r\in k[x,y,z]$ such that $q_V = q_1q_2 +q_Ar$. 
From $q_i \in \m$ we find $4l \equiv q_V \equiv q_Ar \equiv 2lr {\mod {\m^2}}$,
and since $2l \ne 0$ in $\m/\m^2$, this implies that
$r \equiv 2 {\mod {\m}}$. 

Let $\O_{M,P}$ and $\n$
denote the local ring of $P$ on $M$ and its maximal ideal, and let 
the reduction map $\O_{\P^3,P}\rightarrow \O_{M,P}$ be given by 
$s \mapsto \overline{s}$. Since $M$ is contained in $T_P$, we have 
$\overline{l}=0$. Since $C_i$ is tangent to $M$, we have 
$\overline{q_i} \in \n^2$, so $\overline{q_1}\overline{q_2} \in \n^3$. 
Note that this also holds in case $M$ is a component of $C_i$, 
because then we have $\overline{q_i}=0$. Therefore, we find 
$6\overline{g} \equiv \overline{q_V} \equiv \overline{r}\overline{q_A} 
\equiv 2\overline{g}  {\mod {\n^3}}$, and so
$\overline{g} \in \n^3$. This implies $\overline{q_A} =\overline{g} 
\in \n^3$. If $M$ were not contained in 
$A$, then this would imply that $A$ intersects $M$ at $P$ with 
multiplicity at 
least $3$, which is impossible because $A$ is quadratic and $M$ is 
linear. We conclude that $M$ is indeed contained in $A$.

Now assume that $C_i$ is irreducible. Then by Lemma \ref{nointerference} 
the line $M$ is not contained in $C_1 \cup C_2 = A \cap V$, so 
$M \not \subset V$. This implies that
the first intersection is between 
curves that have no common components.  Because $C_i$ is not 
contained in a hyperplane, the same holds for the second intersection.

The first intersection takes place in $T_P$ and the multiplicity
equals the valuation of $\overline{q_V}$ in $\O_{M,P}$.  From the
congruence $\overline{q_V} \equiv 6\overline{g} {\mod {\n^3}}$ we
conclude that $\overline{q_V} \in \n^3$, so the multiplicity is at
least $3$. Let $\O_{C,P}$ and $\p$ denote the local ring of $P$ on
$C_i$ and its maximal ideal, and let the 
reduction map $\O_{\P^3,P}\rightarrow \O_{C,P}$ be given by $s \mapsto
\tilde{s}$. Since $\tilde{q_V}$ vanishes on $C_i$, we have
$4\tilde{l}+6\tilde{g} \equiv 0 {\mod{\p^3}}$.  Because $\tilde{q_A}$
vanishes on $C_i$, we also have $2\tilde{l}+\tilde{g}=0$. Together
this implies $l \in \p^3$. As $l$ defines $T_P$, this implies that
$T_P$ intersects $C_i$ at $P$ with multiplicity at least $3$.
\end{proof}

\begin{define}
For the rest of this section, fix a point $P \in U(\Qbar)$ and
$i,j$ such that $\{i,j\} = \{1,2\}$.  For $k=1,2$, let $C_k$ be the fibre of
$f_k$ through $P$ and $M_k$ the tangent line to $C_k$ at $P$.
\end{define}

\begin{cor}\label{otherpoint}
The line $M_j$ intersects $C_i$ in a scheme of dimension $0$ and
degree $2$ that contains the reduced point $P$.  (The intersection
may be a nonreduced scheme supported at $P$.)
\end{cor}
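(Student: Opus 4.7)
The plan is to reduce the statement to a line-meets-quadric computation in $\P^3$. Applying Proposition \ref{essence} with the roles of $i$ and $j$ interchanged, I would first conclude that $M_j \subset A$. From the proof of Proposition \ref{essence} there is a quadric surface $B_i \subset \P^3$ (the vanishing locus of the polynomial called $q_i$ in that proof) such that $C_i = B_i \cap A$ as subschemes of $\P^3$. Since $M_j \subset A$, this yields the key identity
\[
M_j \cap C_i \;=\; M_j \cap B_i \cap A \;=\; M_j \cap B_i
\]
as subschemes of $\P^3$.

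Next I would verify that $M_j \not\subset B_i$. If $M_j$ were contained in $B_i$, then combining with $M_j \subset A$ gives $M_j \subset B_i \cap A = C_i$, contradicting Lemma \ref{nointerference} applied with $M = M_j$. Since $M_j$ is a line and $B_i$ is a quadric surface not containing it, B\'ezout's theorem implies that $M_j \cap B_i$ is a zero-dimensional subscheme of degree $2$. The point $P$ lies in $M_j$ by definition and in $C_i$ because $C_i$ passes through $P$, so $P$ belongs to the support of this intersection; the scheme $M_j \cap C_i$ may be either the reduced pair $P + Q$ or the nonreduced scheme $2P$.

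I do not anticipate a serious obstacle: the argument is essentially formal once Proposition \ref{essence} and Lemma \ref{nointerference} are in hand. The only delicate point is to use the scheme-theoretic (not merely set-theoretic) equality $C_i = B_i \cap A$, so that the B\'ezout count of $2$ for $M_j \cap B_i$ is genuinely the degree of $M_j \cap C_i$ and not just an upper bound; this is precisely how the auxiliary quadrics $q_i$ are introduced in the proof of Proposition \ref{essence}.
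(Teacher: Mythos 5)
Your proof is correct, and the degree count is done by a genuinely different route than the paper's. Both arguments begin identically: $M_j \subset A$ from Proposition \ref{essence} (applied with the roles of $i$ and $j$ swapped) and $M_j \not\subset C_i$ from Lemma \ref{nointerference}. But where you then pass to the ambient quadric $B_i = V(q_i)$, use the scheme-theoretic equality $C_i = A \cap B_i$ together with $M_j \subset A$ to rewrite $M_j \cap C_i$ as $M_j \cap B_i$, and conclude by B\'ezout for a line against a quadric in $\P^3$, the paper instead computes the intersection on the surface $A$ itself and must split into two cases: if $A$ is nonsingular, $M_j$ is a ruling line and $C_i$ is a curve of type $(2,2)$, so $M_j \cdot C_i = 2$; if $A$ is singular it is a cone, and the paper first checks that $C_i$ avoids the vertex (which requires observing that exactly one coordinate of $P$ vanishes and that $P$ then lies on no line) before intersecting the line $M_j$ through the vertex with the quadric section $C_i$. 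Your version is shorter and uniform --- the only input beyond the two cited results is that $C_i$ is cut out on $A$ by a single quadric, a fact the paper itself records in the first sentence of its proof --- and since the restriction of $q_i$ to $M_j$ is a nonzero binary quadratic form, you get degree exactly $2$ rather than just an upper bound, as you correctly emphasize. What the paper's case analysis buys in exchange is the explicit geometric identification of $M_j$ as a ruling line of $A$ (or, in the cone case, the unique line on $A$ through $P$ and the vertex), information that is reused later, for instance in the proof of Proposition \ref{zerofix}.
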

\begin{proof}
Let $T_R$ and $A=\tau^{-1}(T_R)$ be as in Proposition \ref{essence}. Recall
that $C_i=\tau^{-1}(L)$ for some line $L$ in $T_R$, so $C_i$ is 
defined in $A$ by a single quadric. The line $M_j$ is
contained in $A$ by Proposition \ref{essence}, but not in $C_i$
by Lemma \ref{nointerference}, so the dimension of the intersection is 
indeed $0$. First suppose $A$ is nonsingular.
Then $M_j$ is a line in one of the rulings on $A$. The curve $C_i$ on $A$ is 
of type $(2,2)$, so it intersects $M_j$ twice, counted 
with multiplicity. The claim follows immediately. Now assume that 
$A$ is singular. Without loss of generality we assume $V=V_0$. 
Suppose $P = (x_0:y_0:z_0:w_0)$, so that $A$ is given by 
$x_0^2x^2+y_0^2y^2+z_0^2z^2+w_0^2w^2=0$. The facts that $A$ is singular 
and that $P \in U$ imply that exactly one of the coordinates 
of $P$ is zero. We deduce that $A$ is the cone over a smooth conic, 
and that $P$ is not contained in any of the lines on $V$, as their 
equations imply that if one coordinate is zero, then so is another. 
Therefore $C_i$ is smooth, so, as $C_i$ is defined in $A$ by a single 
equation, it does not contain the vertex $S$ of $A$. This means we can 
naively apply the usual intersection theory to study the intersections
of $C_i$ with other curves.  The line $M_j$ is a line on $A$ through the 
vertex $S$ and intersects any hyperplane section that does not 
go through $S$ once. Since $C_i$ is a quadratic hypersurface section, 
we find $M_j \cdot C_i = 2$. Again, the claim follows.
\end{proof}

\begin{define}
Following Corollary \ref{otherpoint}, we define two morphisms
$e_i,e_j \colon U \rightarrow V$. The morphism $e_i$ sends
any point $R$ to the unique second intersection point between the
fibre of $f_i$ through $R$ and the tangent at $R$ to the fibre of
$f_j$ through $R$. The morphism $e_j$ is defined by interchanging 
the roles of $i$ and $j$.
\end{define}

By definition, $e_i$ and $e_j$ respect the fibrations $f_i$ and $f_j$ 
respectively. 

\begin{cor}\label{hyp}
Let $T_P$ 
denote the tangent plane to $V$ at $P$, and set 
$D = T_P \cap V$. If $C_j$ is irreducible, then 
the line $M_j$ intersects $D$ in the 
divisor $3(P)+(e_i(P))$ of $D$. If $C_i$ is irreducible, then $T_P$ intersects 
$C_i$ also in $3(P)+(e_i(P))$, but as a divisor on $C_i$.
\end{cor}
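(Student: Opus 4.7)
The plan is to deduce both equalities by combining three ingredients: the intersection multiplicity lower bound at $P$ from Proposition \ref{essence}, the presence of $e_i(P)$ in each intersection, and a Bezout degree count giving total degree $4$. The key preliminary observation is that $M_j \subset T_P$, since $M_j$ is tangent to the smooth curve $C_j \subset V$ at $P$ and hence tangent to $V$. Combined with the definition of $e_i(P)$ as the second intersection of $M_j$ with $C_i$, this places $e_i(P)$ in both $M_j \cap D$ (using $C_i \subset V$) and $T_P \cap C_i$.

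Next, I would verify that each intersection is zero-dimensional of total degree $4$. For $M_j \cap D = M_j \cap V$ this requires $M_j \not\subset V$; but any line on $V$ lies in $\Lambda_1 \cup \Lambda_2$, and a line through $P$ belonging to $\Lambda_k$ would lie in the fibre $C_k$, so $M_j \subset V$ would force $M_j \subset C_i$, contradicting Lemma \ref{nointerference}, or $M_j \subset C_j$, contradicting the irreducibility of $C_j$. For $T_P \cap C_i$ I need $C_i \not\subset T_P$; but $D = T_P \cap V$ is singular at $P$ while $C_i$ is smooth at $P$, so an inclusion $C_i \subseteq D$ together with $\deg C_i = \deg D = 4$ and the irreducibility of $C_i$ would force $C_i = D$, a contradiction.

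Proposition \ref{essence}, applied with the roles of $i$ and $j$ interchanged for the first claim and directly for the second, then yields $(M_j \cdot D)_P \ge 3$ and $(T_P \cdot C_i)_P \ge 3$. When $e_i(P) \ne P$, combining these with the degree bound of $4$ and the fact that $e_i(P)$ contributes at least $1$ to each intersection forces the multiplicity at $P$ to be exactly $3$ and at $e_i(P)$ to be exactly $1$, yielding $M_j \cap D = T_P \cap C_i = 3(P) + (e_i(P))$. The apparently degenerate case $e_i(P) = P$ is the main obstacle, since then the claimed divisor $4(P)$ demands multiplicity exactly $4$ at $P$; I expect to dispatch it by showing that $e_i(P) = P$ cannot occur for $P \in U$, as it would force the linear form defining $T_P$ to be tangent to the conic cutting out the tangent cone of $A$ at $P$, which a short direct computation rules out using the defining equation of $V$ together with the nonvanishing of all coordinates of $P$.
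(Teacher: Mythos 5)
Most of your argument coincides with the paper's: establish that both intersections are zero-dimensional, apply B\'ezout to get total degree $4$, quote Proposition \ref{essence} for multiplicity at least $3$ at $P$, and observe that $e_i(P)$ lies in both intersections because $e_i(P)\in C_i\subset V$ and $e_i(P)\in M_j\subset T_P$. Your routes to non-degeneracy differ slightly but are sound: for $M_j\not\subset V$ you go through the classification of the $48$ lines as components of singular fibres rather than through $M_j\subset A$ and $A\cap V=C_1\cup C_2$, and for $C_i\not\subset T_P$ you compare the smoothness of $C_i$ at $P$ with the singularity of $D$ there rather than noting that an irreducible fibre is not a plane curve. Both are fine.

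The genuine gap is in your treatment of the case $e_i(P)=P$. You propose to dispatch it by proving that this equality cannot occur for $P\in U$, invoking ``the nonvanishing of all coordinates of $P$.'' But $P\in U$ only excludes the $24$ points with \emph{two} vanishing coordinates; points with exactly one vanishing coordinate lie in $U$, and for precisely those points one has $e_1(P)=e_2(P)=P$ (this is the content of Proposition \ref{zerofix}, and Corollary \ref{otherpoint} already warns that $M_j\cap C_i$ may be a nonreduced scheme supported at $P$). So the computation you envision cannot succeed, and no hypothesis of the corollary lets you assume all coordinates of $P$ are nonzero. Moreover this degenerate case is not dispensable: the paper later uses the corollary exactly there, reading $3(P)+(e_k(P))$ as $4(P)$ in the proof of Proposition \ref{zerofix}. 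What is actually needed when $e_i(P)=P$ is an argument that the full degree-$4$ intersection is concentrated at $P$; for $M_j\cap D$ one can get this from $M_j\subset A$ and $A\cap V=C_i\cup C_j$, since $M_j\cdot C_i$ is the degree-$2$ scheme supported at $P$ of Corollary \ref{otherpoint} and $M_j$ is tangent to $C_j$ at $P$, so the two contributions already exhaust the degree $4$ at the single point $P$. Your proposal as written would leave the statement unproved in exactly the case where it is later needed.
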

\begin{proof}
Let $A$ be as in Proposition \ref{essence}. Then we have 
$M_j \subset A$ and $A \cap V = C_i \cup C_j$. If $C_j$ is 
irreducible, then $M_j$ is not contained in $C_j$, and 
by Lemma \ref{nointerference} also not in $C_i$, so $M_j \not 
\subset V$, and $M_j \cap D$ is $0$-dimensional. If $C_i$ is 
irreducible, then it is not contained in $T_P$, so $T_P \cap C_i$ 
is $0$-dimensional.
Both intersections have degree $4$ by B\'ezout's Theorem, applied 
in $T_P$ and $\P^3$ respectively. By Proposition \ref{essence} the 
intersection at $P$ has multiplicity at least $3$, so it suffices to 
show that $e_i(P)$ is contained in both intersections. This follows 
from $e_i(P) \in C_i \subset V$ and $e_i(P) \in M_j \subset T_P$. 
\end{proof}

\begin{remark}
Since the $M_k$ intersect $D$ at $P$ with multiplicity at least $3$, they 
are exactly the ``tangent'' lines to the node on $D$ at $P$ that we
discussed in the introduction.
By Corollary \ref{hyp} this means that the $e_i(P)$ are the two points 
obtained from $P$ as described there. 
\end{remark}

\begin{remark}
Let $H$ be a hyperplane section of the generic fibre 
$\mathcal{V}_i/{\Qbar(t)}$ 
in $\P^3$ of $f_i$. By Proposition \ref{hyp}, if we identify 
$\mathcal{V}_i$ with $\Pic^1(\mathcal{V}_i)$, then $e_i$ is given 
by sending $R$ to $H-3R$ for any point $R$ on $\mathcal{V}_i$. 
\end{remark}


Although a computer is useful, even by hand it is not impossible
to check that, on an open subset, $e_1$ and $e_2$ are given by sending 
$(x_0:y_0:z_0:w_0)$ to $(x_1:y_1:z_1:w_1)$ with
\begin{align}
x_1&=x_0\big((3bcy_0^4z_0^4+adx_0^4w_0^4)(ax_0^4+dw_0^4) + 4Nx_0^2y_0^2z_0^2w_0^2(by_0^4-cz_0^4)\big),\nonumber\\
y_1&=y_0\big((3acx_0^4z_0^4+bdy_0^4w_0^4)(by_0^4+dw_0^4) + 4Nx_0^2y_0^2z_0^2w_0^2(cz_0^4-ax_0^4)\big),\label{abcdeqs}\\
z_1&=z_0\big((3abx_0^4y_0^4+cdz_0^4w_0^4)(cz_0^4+dw_0^4) + 4Nx_0^2y_0^2z_0^2w_0^2(ax_0^4-by_0^4)\big),\nonumber\\
w_1&=w_0\big(cdz_0^4w_0^4(cz_0^4+dw_0^4) - abx_0^4y_0^4(9cz_0^4+dw_0^4)\big),\nonumber
\end{align}
where $N$ is one of the two square roots of $abcd$.

\begin{define}
For each coordinate $v$ of $\P^3$, let $\sigma_v \in \mu$ denote the 
automorphism of $\P^3$ that negates the $v$-coordinate.  For a pair
of coordinates $u,v$, let $\sigma_{uv} = \sigma_u \sigma_v$.  
\end{define}

\begin{prop}\label{sigmacomm}
The automorphisms $\sigma_u$ commute with the maps $e_i$ on $V$ and with 
all the maps $\iota_{a,b,c,d}$.
\end{prop}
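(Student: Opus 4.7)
The plan is to treat the two commutations separately, exploiting the fact that both $\sigma_u$ and $\iota_{a,b,c,d}$ are purely coordinate-wise rescalings, while $e_i$ admits a clean geometric characterization on which $\sigma_u$ acts transparently.

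Commutation with $\iota_{a,b,c,d}$ is essentially tautological: both maps act on each homogeneous coordinate by multiplication by a scalar (a chosen fourth root of one of $a,b,c,d$ in the case of $\iota_{a,b,c,d}$, and $\pm 1$ in the case of $\sigma_u$), and such diagonal scalings commute coordinate by coordinate. One only needs to observe that under $\iota_{a,b,c,d}$ the automorphism $\sigma_u$ of $V_{a,b,c,d}$ corresponds to the automorphism $\sigma_u$ of $V_0$, which is immediate from the formula in Definition \ref{iota}.

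For the commutation with $e_i$, the key observation is that $\tau \circ \sigma_u = \tau$, since squaring each coordinate destroys the sign introduced by $\sigma_u$. Hence $\sigma_u$ descends to the identity on $Q = \tau(V)$, so it preserves each ruling $\pi_k$ of $Q$ individually, and therefore $f_k \circ \sigma_u = f_k$ for $k = 1,2$. Consequently, for any $P \in U$, the automorphism $\sigma_u$ sends the fibre $C_k$ of $f_k$ through $P$ to itself, and being a linear automorphism of $\P^3$ it also maps the tangent line $M_j$ to $C_j$ at $P$ to the tangent line to $C_j$ at $\sigma_u(P)$. Recalling from Corollary \ref{otherpoint} that $e_i(P)$ is the residual point of the length-$2$ scheme $M_j \cap C_i$ after subtracting the reduced point $(P)$, applying $\sigma_u$ to this scheme-theoretic recipe turns it directly into the recipe for $e_i(\sigma_u(P))$, yielding $\sigma_u \circ e_i = e_i \circ \sigma_u$.

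I do not anticipate any substantive obstacle. The only point requiring care is to verify that $\sigma_u$ preserves each fibration with its label rather than swapping $f_1$ and $f_2$, and this is automatic from the fact that $\sigma_u$ acts as the identity on $Q$. As an independent sanity check one can inspect the explicit formulas (\ref{abcdeqs}): each output coordinate is the corresponding input coordinate times a polynomial in the fourth powers $x_0^4, y_0^4, z_0^4, w_0^4$ and in $x_0^2 y_0^2 z_0^2 w_0^2$, so negating a single input coordinate negates precisely the corresponding output, which is exactly the claim.
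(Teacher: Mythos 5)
Your proof is correct, and for the commutation with the $e_i$ it takes a genuinely different route from the paper. The paper's entire argument is the one you relegate to a ``sanity check'': it reads the commutation off the explicit formulas in (\ref{abcdeqs}), observing that each output coordinate is the corresponding input coordinate times an expression in $x_0^4,y_0^4,z_0^4,w_0^4$ and $x_0^2y_0^2z_0^2w_0^2$, all invariant under negating a single coordinate (and states that the $\iota_{a,b,c,d}$ case is obvious, as you do). Your primary argument is instead geometric: from $\tau\circ\sigma_u=\tau$ you get $f_k\circ\sigma_u=f_k$, so $\sigma_u$ preserves each fibration with its label, carries fibres through $P$ to the same fibres through $\sigma_u(P)$, and carries tangent lines to tangent lines; the recipe of Corollary \ref{otherpoint} then transforms equivariantly. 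This is a valid and arguably more robust proof --- it does not depend on trusting the computer-derived formulas, and it applies pointwise on all of $U$ rather than only on the open subset where (\ref{abcdeqs}) is stated to hold (the paper implicitly extends from that dense open set by continuity). What the paper's approach buys is brevity, and the fact that the same formulas are needed elsewhere anyway (e.g.\ in Propositions \ref{onlines} and \ref{zerofix}), so the verification comes for free. Your observation that $\sigma_u$ acts as the identity on $Q$, hence cannot swap the rulings, correctly disposes of the only delicate point in the geometric route.
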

\begin{proof}
For the $e_i$, this follows immediately from the equations in (\ref{abcdeqs}).
For the $\iota_{a,b,c,d}$ it is obvious.
\end{proof}

\begin{prop}\label{onlines}
Assume $P$ is contained in a line that is an 
irreducible component of $C_i$. Then 
there are two different coordinates $u,v$ of $\P^3$ such that 
$e_j(P) = \sigma_{uv}(P)$, while $P$ and $e_i(P)$ lie on 
nonintersecting components of $C_i$ and $e_i(P) \not \in \Omega$.   In
particular, $e_i(P)$ lies on a line.
\end{prop}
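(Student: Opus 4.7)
The plan is to exploit the transitive action of $G$ on the $48$ lines: after applying an appropriate element of $G$, we may assume (the statement being purely geometric) that $V=V_0'$ and that the line $L$ through $P$ is $L_1:x=z$, $y=w$, so that $P=(s:t:s:t)$ with $s,t\ne 0$ (the nonvanishing comes from $P\in U$). A direct factorisation then identifies $C_i$ as the $I_4$ cycle comprising the four lines $L_1$, $L_2:x=z,y=-w$, $L_3:x=-z,y=w$, and $L_4:x=-z,y=-w$, arranged so that $L_1,L_4$ (and $L_2,L_3$) are the opposite pairs of the cycle and hence do not meet.

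Next I would compute $e_j(P)$. Since $P$ is a smooth point of the singular curve $C_i$ lying on its unique line component $L_1$, the tangent $M_i$ to $C_i$ at $P$ is simply $L_1$. Thus $e_j(P)$ is the residual intersection of $L_1$ with the fibre $C_j$, and by the argument of Corollary~\ref{otherpoint} (with the roles of $i$ and $j$ interchanged) the set $L_1\cap C_j$ has degree $2$ and contains $P$. Writing down an explicit equation for $C_j$ inside the quadric $A=\tau^{-1}(T_R)$ from Proposition~\ref{essence} (with $R=\tau(P)$) and substituting a parameterisation $(a:b:a:b)$ of $L_1$ reduces to a single binomial equation in $(a:b)$, whose second root gives $e_j(P)=(-s:t:-s:t)=\sigma_{xz}(P)$, establishing the first assertion with $(u,v)=(x,z)$.

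For $e_i(P)$, the tangent line $M_j$ to $C_j$ at $P$ is contained in $A$ by Proposition~\ref{essence}. I would verify that $A$ is smooth (its defining equation $s^2(x^2-z^2)+t^2(y^2-w^2)=0$ has no singular point on $\P^3$ when $s,t\ne 0$), so $A$ is $\P^1\times\P^1$ with two rulings. The ruling through $P$ containing $L_1$ cannot be the one containing $M_j$, because $L_1$ meets $C_j$ at the two distinct points $P$ and $\sigma_{xz}(P)$ and is therefore secant, not tangent, to $C_j$ at $P$. So $M_j$ is determined as the unique line of the other ruling through $P$; its equations can be read off from the factorisation of the defining equation of $A$. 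Intersecting $M_j$ with each of the four lines $L_k$ yields trivial intersections on $L_2$ and $L_3$, $P$ itself on $L_1$, and a single additional point on $L_4$, namely $e_i(P)=(t^3:-s^3:-t^3:s^3)$. Since $L_1\cap L_4=\emptyset$, this shows that $P$ and $e_i(P)$ lie on nonintersecting components of $C_i$; and since $s,t\ne 0$, every coordinate of $e_i(P)$ is nonzero, so $e_i(P)\notin\Omega$.

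The main obstacle is the bookkeeping for the ruling structure on $A$ and the four separate intersection calculations with the $L_k$; once Proposition~\ref{essence} places $M_j$ inside $A$, everything reduces to straightforward linear algebra in four variables, and one could alternatively verify the formulas for $e_i(P)$ and $e_j(P)$ directly from the explicit equations~(\ref{abcdeqs}) specialised to $(a,b,c,d)=(1,1,-1,-1)$ and $P=(s:t:s:t)$.
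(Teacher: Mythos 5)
Your proof is correct, and it follows the paper's overall strategy: both arguments use the $G$-action to reduce to $V_0'$ with $L$ the line $x=z$, $y=w$ and $P=(s:t:s:t)$, $st\neq 0$, and both arrive at $e_j(P)=\sigma_{xz}(P)$ and $e_i(P)=(t^3:-s^3:-t^3:s^3)$ lying on the opposite component $x+z=y+w=0$ of the $I_4$ fibre. The difference is in how these two points are produced. The paper observes that $e_i$ preserves $f_i$, uses this to decide which sign of $N$ in (\ref{abcdeqs}) gives which endomorphism, and then substitutes $P$ into those explicit formulas. You instead compute directly from the geometric definition: since $P$ is a smooth point of the singular fibre $C_i$ lying on the single component $L$, the tangent $M_i$ is $L$ itself and $e_j(P)$ is the residual point of $L\cap C_j$; and $M_j$ is identified as the line of the second ruling through $P$ on the smooth quadric $A$ of Proposition \ref{essence} (it cannot be $L$, which meets $C_j$ in two distinct points and so is secant rather than tangent), after which intersecting $M_j$ with the four components of $C_i$ locates $e_i(P)$ on the line disjoint from $L$. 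Your route is self-contained, relying only on Proposition \ref{essence} and Corollary \ref{otherpoint} rather than on the computer-derived equations (\ref{abcdeqs}), at the cost of some bookkeeping with the rulings of $A$ and four line-by-line intersection checks; the paper's route is a one-line substitution but inherits its correctness from those equations. Both computations agree, and your closing remark correctly identifies the substitution into (\ref{abcdeqs}) as the alternative the paper actually takes.
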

\begin{proof}
By Proposition \ref{sigmacomm} we may assume $V=V_0'$. 
Let $L$ be the line in $C_i$ that contains $P$. Note that the 
group $G$ acts by conjugation on the set of the three automorphisms 
of the form $\sigma_{uv}$ with $u \neq v$. 
The group $G$ also acts by conjugation on the pair 
of fibrations and acts accordingly on the set of the two sets 
$\Lambda_i$ of lines. It follows that after acting with an appropriate 
element of $G$, we may assume that $L$ is given by $x=z$ and $y=w$, 
so there are $s,t \in \Qbar$ such that $P=(s:t:s:t)$. From $P \not \in
\Omega$ we get $st \neq 0$. Then $f_i$ can 
be given by $(x:y:z:w) \mapsto (x^2-z^2:w^2+y^2)$, while $f_j$ 
can be given by $(x:y:z:w) \mapsto (x^2+z^2:w^2+y^2)$. 
As $e_i$ respects $f_i$, it is easy to check which equations in (\ref{abcdeqs})
give $e_i$. 
It turns out that $e_i$ is given by (\ref{abcdeqs}) with $N=1$, 
while $e_j$ is given by $N=-1$. Substituting in (\ref{abcdeqs}),
we find $e_j(P) = (-s:t:-s:t) = \sigma_{xz}(P)$ and 
$e_i(P) = (t^3:-s^3:-t^3:s^3)$. It is clear that $e_i(P)$ is not
contained in $\Omega$ and lies on the component of $C_i$ given by 
$x+z=y+w=0$, which is a line and does not intersect $L$. 
\end{proof}


\begin{prop}\label{permute}
Let $\pi \in S_4$ be an automorphism of $V_0$ in $\P^3$ given by permutation of the
coordinates, and let $S_4$ act on $V=V_{a,b,c,d}$ by conjugating 
the action on $V_0$ with $\iota_{a,b,c,d}$. 
Then $\pi e_i = e_k \pi$, where $k = i$ if the permutation
underlying $\pi$ is even and $k = j$ if it is odd.
\end{prop}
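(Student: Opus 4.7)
The plan is to reduce the claim to the case $V = V_0$, then exploit the geometric description of $e_i$ to relate $\pi e_i \pi^{-1}$ to $e_i$ or $e_j$ according to whether $\pi$ preserves or swaps the pair of fibrations $\{f_1, f_2\}$, and finally verify that the parity of $\pi$ determines which case occurs.

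First I would set up the reduction. By Corollary \ref{otherpoint}, the endomorphism $e_i$ is characterized purely geometrically: $e_i(R)$ is the unique second intersection of the fibre of $f_i$ through $R$ with the tangent line at $R$ to the fibre of $f_j$ through $R$. After choosing the labels $i = 1, 2$ on $V_{a,b,c,d}$ consistently with those on $V_0$ via $\iota = \iota_{a,b,c,d}$, this geometric characterization gives $\iota \circ e_i^V = e_i^{V_0} \circ \iota$. Combined with the definition $\pi^V = \iota^{-1} \pi^{V_0} \iota$, the identity $\pi^V e_i^V = e_k^V \pi^V$ becomes equivalent to $\pi^{V_0} e_i^{V_0} = e_k^{V_0} \pi^{V_0}$ on $V_0$, so it suffices to prove the claim there.

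Next, on $V_0$, any automorphism $\pi$ either preserves both $f_1, f_2$ or swaps them. If $\pi$ preserves each $f_k$, then $\pi(C_k)$ is the fibre of $f_k$ through $\pi(R)$ and $\pi(M_j)$ is its tangent line at $\pi(R)$, so applying $\pi$ to the geometric construction of $e_i$ yields $\pi(e_i(R)) = e_i(\pi(R))$. If $\pi$ swaps the fibrations, the same analysis gives $\pi(e_i(R)) = e_j(\pi(R))$. Thus it remains only to show that $\pi$ preserves the fibrations if and only if $\pi$ is an even permutation.

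For this final step I would translate to the quadric $Q_0 = \{x^2+y^2+z^2+w^2=0\}$: since $f_i = \pi_i \circ \tau$ and $\tau$ commutes with coordinate permutations, $\pi$ preserves (resp.\ swaps) the $f_i$ iff it preserves (resp.\ swaps) the two rulings on $Q_0$. A coordinate permutation is orthogonal with respect to $x^2+y^2+z^2+w^2$, and on a smooth quadric surface in $\P^3$ a linear automorphism preserves each ruling iff it lies in the identity component of the automorphism group, equivalently iff its determinant (well-defined on the projective orthogonal group since $\det(-I)=+1$ in four variables) is $+1$. Because the determinant of a permutation matrix is the sign of the permutation, even permutations preserve each ruling and odd ones swap them, giving the desired dichotomy. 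The only subtlety is the bookkeeping in matching the labels of the two fibrations under $\iota_{a,b,c,d}$ in the reduction step, which is a matter of choice and presents no real obstacle.
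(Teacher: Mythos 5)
Your proposal is correct and follows essentially the same route as the paper: reduce to $V_0$ by the purely geometric nature of the statement, observe that $\pi$ preserves or swaps the two rulings on the quadric $x^2+y^2+z^2+w^2=0$ according to its parity, and conclude from the geometric characterization of $e_i$ in Corollary \ref{otherpoint}. The only difference is that you supply the justification (via the determinant on the projective orthogonal group) for the parity--ruling dichotomy, which the paper simply asserts; that justification is sound.
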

\begin{proof}
The statement is purely geometric, so we may assume $V=V_0$.
Note that $\pi$ switches the rulings on the quadric given by 
$x^2+y^2+z^2+w^2=0$ if and only if $\pi$ is odd, and it permutes 
the two elliptic fibrations on $V$ accordingly.
The statement therefore follows from Corollary \ref{otherpoint}.
\end{proof}

We can rephrase Proposition \ref{permute} by stating that inside the
group $G = \mu \rtimes S_4$ we have $G_0 \cap S_4 = A_4$.  We see that
if we conjugate the equations for $e_i$ in (\ref{abcdeqs}) by an element in
$A_4$, then we obtain a new set of equations for $e_i$.  In
particular this will be true for the subgroup $V_4 \subset A_4$ of
products of disjoint cycles. If instead we conjugate the
equations for $e_i$ by an odd element of $S_4$, we get a new set of equations
for $e_j$.

\begin{prop}\label{extend}
The map $e_i$ extends to an endomorphism of $U$.
The sets of equations obtained by conjugating those in (\ref{abcdeqs})
with elements of $V_4$ are sufficient to define $e_i$ on all of $U$.
\end{prop}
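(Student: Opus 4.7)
The proposition has two parts: that $e_i$ extends to an endomorphism of $U$, and that the $V_4$-conjugates of the equations in (\ref{abcdeqs}) suffice to describe $e_i$ on all of $U$.

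For the first part I would invoke the geometric construction directly: for every $P \in U$, smoothness of $C_j$ at $P$ gives a well-defined tangent line $M_j$, and the residual of the degree-$2$ scheme $M_j \cap C_i$ from Corollary \ref{otherpoint} yields a unique point $e_i(P) \in V$. Since $V$ is a smooth projective surface and $V$ is projective, any rational map $V \dashrightarrow V$ is a morphism outside a finite indeterminacy locus; comparing this rational map (defined by (\ref{abcdeqs})) with the pointwise assignment from Corollary \ref{otherpoint} shows that $e_i$ is a morphism on all of $U$.

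For the second part, for each $\pi \in V_4 \subset A_4$ Proposition \ref{permute} yields $\pi e_i = e_i \pi$, so the system of equations obtained by conjugating (\ref{abcdeqs}) with $\pi$ also represents $e_i$ as a rational map on $V$. Let $B_\pi \subset V$ be the indeterminacy locus of the $\pi$-conjugate system (the closed subset of $V$ on which all four output coordinates vanish simultaneously). The proposition then reduces to the containment
\[
\bigcap_{\pi \in V_4} B_\pi \,\subset\, V \setminus U = \Omega.
\]

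To establish this containment I would analyze $B_{\mathrm{id}}$, the indeterminacy locus of (\ref{abcdeqs}) itself, by direct computation. Each of $x_1, y_1, z_1, w_1$ is divisible by the corresponding input coordinate, so $B_{\mathrm{id}}$ decomposes according to which input coordinates vanish at $P$. For a putative base point with all coordinates nonzero, vanishing of each of the four bracketed factors, together with the defining quartic $ax_0^4+by_0^4+cz_0^4+dw_0^4 = 0$, yields an overdetermined system whose solution set can be worked out symbolically; the computation is cut down by the $\mu$-invariance provided by Proposition \ref{sigmacomm}. For $P \in U$ with exactly one coordinate zero, one substitutes directly and verifies that at least one $V_4$-translate (swapping the vanishing coordinate with a nonzero one) produces a nonzero image. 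The main obstacle is exactly this computational identification of $B_{\mathrm{id}}$ and the combinatorial check that the four $V_4$-translates meet only inside $\Omega$; the task is made tractable by the factored form of the equations and the group symmetries already in hand.
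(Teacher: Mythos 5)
Your handling of regularity is essentially the paper's approach: the paper likewise verifies (by computer) that the common vanishing locus of the $16$ polynomials in the four $V_4$-conjugate systems is supported on $\Omega$, and your appeal to Proposition \ref{permute} to see that each conjugate system again represents $e_i$ is the right justification. One caveat: your first paragraph on its own is not a proof --- the mere fact that the pointwise assignment from Corollary \ref{otherpoint} is well defined at every $P\in U$ does not show that the rational map given by (\ref{abcdeqs}) is regular there --- but this is subsumed by the base-locus computation you carry out in the second part, so it is not the real problem.

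The genuine gap is that your argument only produces a morphism $U \to V$, whereas the proposition asserts that $e_i$ is an endomorphism \emph{of $U$}: you must also prove $e_i(U) \subseteq U$, i.e.\ that no point of $U$ is mapped into $\Omega$. This is not automatic and is needed later, for instance to form $e_i^2(P)$ in Proposition \ref{esquared}. The paper closes this as follows: suppose $e_i(R) \in \Omega$ for some $R \in U$. Since $e_i$ respects the fibration $f_i$, the point $e_i(R)$ lies on the fibre $C_i$ of $f_i$ through $R$; as the points of $\Omega$ are exactly the singular points of the fibres, $C_i$ is a singular fibre, hence of type $I_4$, so $R$ lies on a line that is an irreducible component of $C_i$. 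Proposition \ref{onlines} then gives $e_i(R) \notin \Omega$, a contradiction. You need to add this step (or an equivalent one) for the statement as written to be established.
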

\begin{proof}
The set of all points $R$ where none of these $4$ 
sets of equations defines a regular map is determined on $V$ 
by the vanishing of all $16$ polynomials in the sets.
One checks by computer that this set is supported on $\Omega$.
Now suppose that for some $R$ we have $e_i(R) \in \Omega$, and let $C_i$ 
be the fibre of $f_i$ through $R$. Then $e_i(R)$ also lies on $C_i$, so 
$C_i$ is a singular fibre, and Proposition \ref{onlines} contradicts 
$e_i(R) \in \Omega$. We conclude that $e_i(U) \subset U$. 
\end{proof}

\begin{remark}\label{noextend}
In fact the $e_i$ do not extend to any of the points in $\Omega$. 
One way to prove this is to show that, for each point $R\in \Omega$ and
each $i$, there are two lines that meet at $R$ whose images under the
$e_i$ are disjoint. This reflects the fact that K3 surfaces do not
have endomorphisms of degree greater than $1$. 
\end{remark}

\begin{prop}\label{zerofix}
The following are equivalent: (a) $e_1(P) = P$,
(b) $e_2(P) = P$, (c) exactly one of the coordinates of $P$ is $0$.
\end{prop}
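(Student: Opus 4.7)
My strategy is to reduce both (a) and (b) to a single symmetric geometric condition, and then use the local analysis from Proposition \ref{essence} to match that condition with (c). By Corollary \ref{otherpoint}, $e_i(P) = P$ exactly when the degree-$2$ scheme $M_j \cap C_i$ is supported entirely at $P$; since $C_i$ is smooth at $P$ (as $P \in U$), this occurs iff $M_j$ is tangent to $C_i$ at $P$, iff $M_j = M_i$. Thus both (a) and (b) are equivalent to the symmetric condition $M_1 = M_2$, which immediately gives (a) $\Leftrightarrow$ (b).

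For (c) $\Rightarrow$ (a), I would appeal directly to the formulas (\ref{abcdeqs}): when any coordinate of $P$ vanishes, every term carrying the factor $x_0^2 y_0^2 z_0^2 w_0^2$ (and hence every occurrence of $N$) drops out. By Proposition \ref{permute} the group $S_4$ permutes $\{e_1, e_2\}$, so it suffices to treat one case, say $w_0 = 0$. In that case the first three coordinates of the image collapse to a common factor $3abc x_0^4 y_0^4 z_0^4$ times $(x_0, y_0, z_0)$, and the last coordinate is $0$; hence $e_i(P) = P$. (The analogous direct check for one of $x_0, y_0, z_0$ equal to $0$ uses the surface equation $ax_0^4 + by_0^4 + cz_0^4 + dw_0^4 = 0$ to collapse the image.)

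For the converse (a) $\Rightarrow$ (c), I would analyze the tangent cone of $C_1 \cup C_2 = A \cap V$ at $P$ inside $A$. The proof of Proposition \ref{essence} shows that on $A$ one has $q_V \equiv 4g \pmod{\m_{A,P}^3}$, where $g = \sum p_k^2 X_k^2$ in homogeneous coordinates $(p_k) = (x_0, y_0, z_0, w_0)$ of $P$. Hence the tangent cone is defined by the restriction of $g$ to $T_P A$, which is the $2$-dimensional quotient of the hyperplane $T = \{\sum p_k^3 X_k = 0\}$ by the line $\mathrm{span}(P)$; this quotient is well-defined because $P \in T$ (using $\sum p_k^4 = 0$) and because $g(\xi + tP) - g(\xi) = 2t \sum p_k^3 \xi_k$ vanishes on $T$. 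The condition $M_1 = M_2$ is then equivalent to $g|_{T_P A}$ having rank at most $1$.

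Finally, I would compute this rank directly. A vector $v \in T$ lies in the radical of $g|_T$ iff the linear form $w \mapsto \sum p_k^2 v_k w_k$ is proportional to $\sum p_k^3 w_k$, i.e., iff $v_k = \lambda p_k$ for every $k$ with $p_k \neq 0$, the components with $p_k = 0$ being free; any such $v$ lies in $T$ automatically. Hence the radical of $g|_T$ has dimension $1 + \#\{k : p_k = 0\}$, so the radical of $g|_{T_P A}$ has dimension $\#\{k : p_k = 0\}$. Since $P \in U$ has at most one zero coordinate and $T_P A$ has dimension $2$, the rank of $g|_{T_P A}$ is $2$ when no coordinate of $P$ vanishes and $1$ when exactly one does. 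This gives $M_1 = M_2 \Leftrightarrow$ (c), completing the equivalence. The main bookkeeping challenge will be the shift between the affine-chart form of $g$ used in Proposition \ref{essence} and the projective statement needed here, but no new ingredients beyond that proposition are required.
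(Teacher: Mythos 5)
Your argument is correct, and the key converse direction takes a genuinely different route from the paper. Your reduction of (a) and (b) to the single symmetric condition $M_1=M_2$ via intersection multiplicities, followed by reading that condition off from the rank of the binary quadratic form $\overline{g}$ on $T_PA$ that cuts out the tangent cone of $A\cap V$ at $P$ inside $A$, replaces the paper's global argument: there, (a) $\Rightarrow$ (c) is proved by first excluding singular $C_k$ via Proposition \ref{onlines}, then using Corollary \ref{hyp} to get $4(P)\sim H$ on the elliptic curve $C_k$, so that $P$ is one of the $16=\deg[4]$ solutions of $4(S)\sim H$, all of which are already accounted for by the coordinate-plane points supplied by the implication (c) $\Rightarrow$ (a). Your local computation buys uniformity --- it needs neither irreducibility of the fibre nor the group law, only the quadratic approximations already established in the proof of Proposition \ref{essence} --- and it makes (a) $\Leftrightarrow$ (b) manifest rather than a byproduct of passing through (c); the paper's counting argument, in exchange, is the template that scales to the later classification of points with $8(S)\sim 2H$ in Proposition \ref{ordtwo}. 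Two minor points: your separate verification of (c) $\Rightarrow$ (a) from the formulas (\ref{abcdeqs}) is redundant once the rank computation gives $M_1=M_2\Leftrightarrow$ (c) (though it matches the paper's own alternative direct calculation), and the identities $\sum p_k^4=0$ and $T=\{\sum p_k^3X_k=0\}$ tacitly normalize to $V=V_0$, which is harmless since the statement is geometric but should be stated.
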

\begin{proof} 
Without loss of generality we suppose $V=V_0$. Let $A$ be as in
Proposition \ref{essence} and assume $P=(x_0:y_0:z_0:w_0)$. Then $A$
is given by $x_0^2x^2+y_0^2y^2+z_0^2z^2+w_0^2w^2=0$. Suppose (c)
holds.  Then $A$ is the cone over a nonsingular conic, and from
$M_1,M_2 \subset A$ we conclude that $M_1=M_2$ is the unique line on
$A$ through $P$ and the vertex of $A$. This implies that the $M_i$ are
both tangent to both the $C_i$, so the second intersection point in
$M_1 \cap C_2$ and $M_2 \cap C_1$ is again $P$, proving (a) and
(b). Alternatively, we can verify the statement by direct calculation:
we may assume $x_0=0$ by Proposition \ref{permute}. Substituting into
(\ref{abcdeqs}) and using $-y_0^4 =z_0^4 + w_0^4$, we simplify
$e_i(P)$ to
%
%
$(0:-y_0^5 z_0^4 w_0^4:-y_0^4 z_0^5 w_0^4:-y_0^4 z_0^4 w_0^5) =
(0:y_0:z_0:w_0)$.  Conversely, fix $k \in \{1,2\}$ and assume 
$e_k(P) = P$.  
If $C_k$ were singular, then $P$ and $e_k(P)$ would lie on nonintersecting
components of $C_k$ by Proposition \ref{onlines}, so we conclude that $C_k$ 
is nonsingular. By 
Corollary \ref{hyp} the divisor $4(P)=3(P) +(e_k(P))$ on $C_k$ is linearly 
equivalent to a hyperplane section $H$. Since multiplication by $4$ on the 
Jacobian of $C_k$ has degree $16$, there are exactly $16$ points $S$ on 
$C_k$ for which $4S$ is linearly equivalent to $H$. By the implication 
(c) $\Rightarrow$ (a) \& (b), we can already account for all $16$ of these 
points, namely the intersection points of the coordinate planes with $C_k$, 
as each of the $4$ planes intersects $C_k$ in $4$ different points. 
This shows that $P$ is one of these points, which proves (c). 
\end{proof}


\begin{prop} \label{esquared}
Suppose that $C_i$ is irreducible. Then the divisor $(e_i^2(P))-(P)$ 
on $C_i$ is linearly equivalent to $-2(e_i(P))+2(P)$.  
\end{prop}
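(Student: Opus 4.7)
The plan is to apply Corollary \ref{hyp} twice, once at $P$ and once at $e_i(P)$, and subtract the resulting linear equivalences on $C_i$.

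First, since $e_i$ respects the fibration $f_i$, the point $e_i(P)$ lies on the same fibre $C_i$ as $P$. By Proposition \ref{extend} we have $e_i(U) \subset U$, so $e_i(P) \in U$; together with the hypothesis that $C_i$ is irreducible, this lets us apply Corollary \ref{hyp} at $e_i(P)$ (in place of $P$), with $C_i$ still the fibre of $f_i$ through $e_i(P)$. Let $H$ denote the divisor class of a hyperplane section on $C_i$. Corollary \ref{hyp} then yields two linear equivalences on $C_i$:
\begin{align*}
3(P) + (e_i(P)) &\sim H, \\
3(e_i(P)) + (e_i^2(P)) &\sim H.
\end{align*}

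Subtracting the second from the first gives
\[
3(P) - 2(e_i(P)) - (e_i^2(P)) \sim 0,
\]
so that $(e_i^2(P)) - (P) \sim 2(P) - 2(e_i(P))$, which is the desired statement. There is no real obstacle: the only thing to check is that Corollary \ref{hyp} genuinely applies at $e_i(P)$, which follows from $e_i(P) \in U$ and the fact that $C_i$ (the fibre through both $P$ and $e_i(P)$) is irreducible by hypothesis.
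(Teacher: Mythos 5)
Your proof is correct and is essentially identical to the paper's: both apply Corollary \ref{hyp} at $S=P$ and $S=e_i(P)$ to get $3(P)+(e_i(P)) \sim H \sim 3(e_i(P))+(e_i^2(P))$ and subtract. Your extra remark verifying that the corollary applies at $e_i(P)$ (via $e_i(P)\in U$ and irreducibility of $C_i$) is a reasonable bit of added care that the paper leaves implicit.
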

\begin{proof}
Let $H$ denote the hyperplane class on $C_i$.  By Corollary  
\ref{hyp} we have $3(S)+(e_i(S)) \sim H$ for any point $S$ on $C_i$.  
Applying this to $S=P$ and $S=e_i(P)$, we obtain $3(P)+(e_i(P)) \sim 
3(e_i(P)) +(e_i^2(P))$, from which the statement follows.
\end{proof}

\begin{remark}
Proposition \ref{esquared} tells us that if we give $C_i$ the structure 
of an elliptic curve with origin $P$, then we have $e_i^2(P) = -2e_i(P)$, 
and by induction we obtain $e_i^n(P) = a_ne_i(P)$ with $a_1=1$ and 
$a_{n+1} = -3a_n+1$. 
\end{remark}

\begin{cor}\label{orderthree}
Assume that $C_i$ is irreducible. Then the divisor 
$(e_i(P)) - (P)$ has order dividing $3$ if and only if $e_i^2(P) = e_i(P)$.  
\end{cor}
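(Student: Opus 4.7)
The plan is to deduce the corollary directly from Proposition \ref{esquared} by a simple rearrangement of divisor classes, followed by one invocation of a standard fact about linear equivalence of single points on an irreducible curve of positive arithmetic genus.

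First, I would unpack Proposition \ref{esquared}: it gives the linear equivalence $(e_i^2(P)) - (P) \sim -2(e_i(P)) + 2(P)$ on $C_i$. Rearranging, this says
\[
(e_i^2(P)) + 2(e_i(P)) - 3(P) \sim 0.
\]
Subtracting this relation from the trivially-true equality $3(e_i(P)) - 3(P) = 3(e_i(P)) - 3(P)$, I obtain
\[
3\bigl((e_i(P)) - (P)\bigr) \sim (e_i(P)) - (e_i^2(P)).
\]
Hence $(e_i(P)) - (P)$ has order dividing $3$ precisely when $(e_i(P)) \sim (e_i^2(P))$ as divisors on $C_i$.

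The remaining step is to show that this last linear equivalence of degree-one effective divisors forces the two points to coincide. Since $C_i$ is an irreducible fibre of the elliptic fibration $f_i$, it has arithmetic genus $1$. Moreover, Proposition \ref{extend} guarantees that $e_i$ sends $U$ into $U$, so both $e_i(P)$ and $e_i^2(P)$ lie in $U \cap C_i$, which is contained in the smooth locus of $C_i$. On the smooth locus of an irreducible curve of arithmetic genus at least $1$, two points define the same divisor class only if they are equal: otherwise the linear system $|(e_i(P))|$ would have positive dimension, forcing $C_i$ to be rational. Therefore $(e_i(P)) \sim (e_i^2(P))$ is equivalent to $e_i(P) = e_i^2(P)$, completing the proof.

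There is no real obstacle here; the corollary is essentially a reformulation of Proposition \ref{esquared} together with the observation, already recorded in the remark preceding the corollary, that identifying $C_i$ with its Jacobian via the basepoint $P$ reduces the two conditions to the single identity $3\,e_i(P) = 0$.
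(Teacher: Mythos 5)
Your proposal is correct and follows essentially the same route as the paper's own proof: apply Proposition \ref{esquared} to convert the order-divides-$3$ condition into the linear equivalence $(e_i^2(P)) \sim (e_i(P))$, and then use the fact that two distinct points on a curve of positive genus cannot be linearly equivalent. The only difference is that you write out the divisor-class rearrangement explicitly, which the paper leaves implicit.
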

\begin{proof}
By Proposition \ref{esquared}, the order divides $3$ if and only 
if $e_i^2(P)$ is linearly equivalent to $e_i(P)$.  This is equivalent to 
$e_i^2(P)=e_i(P)$, because two different points cannot be linearly equivalent 
on a curve of positive genus.  
\end{proof}

Our goal is to prove that the class of the divisor $(e_i(P)) - (P)$ on $C_i$ 
is often of infinite order in the Jacobian of $C_i$. It turns out that this class is
$2$ times the class of a divisor that has a simple description.

\begin{prop}\label{phipistwo}
Assume that $C_i$ is nonsingular.  Then
the divisors $(e_i(P))-(P)$ and $2(\sigma_u P) -2(P)$ on $C_i$ are linearly
equivalent, where $u$ is any of the coordinates on $\P^3$.  
\end{prop}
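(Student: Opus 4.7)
My strategy is to establish the stronger equivalence $H \sim 2(P) + 2(\sigma_u P)$ on $C_i$, where $H$ denotes the hyperplane class; combined with the relation $H \sim 3(P) + (e_i(P))$ from Corollary \ref{hyp}, this yields the claim. The case $u(P) = 0$ is immediate: then $\sigma_u P = P$, and Proposition \ref{zerofix} gives $e_i(P) = P$, so both sides of the claim vanish. Assume henceforth $u(P) \ne 0$.

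The automorphism $\sigma_u$ satisfies $\tau \circ \sigma_u = \tau$, so it preserves each fibre of $f_i = \pi_i \circ \tau$ and restricts to an automorphism of $C_i$. This restriction is a nontrivial involution because $C_i$, being a smooth genus-one curve of degree $4$ in $\P^3$, is nondegenerate and hence not contained in $\{u=0\}$. The hyperplane $\{u=0\}$ cuts $C_i$ in a $\sigma_u$-invariant effective degree-$4$ divisor supported on the fixed points of $\sigma_u|_{C_i}$. At any such fixed point $F$, the differential of $\sigma_u|_{C_i}$ acts as $-1$ on $T_F C_i$ (otherwise $\sigma_u|_{C_i}$ would be the identity near $F$ by linearization of the finite-order action, and thus on the connected curve $C_i$); since the $(-1)$-eigenspace of $\sigma_u$ on $T_F\P^3$ is the one-dimensional $u$-axis, complementary to the $(+1)$-eigenspace $T_F\{u=0\}$, it follows that $T_F C_i$ equals this $(-1)$-eigenspace and $C_i$ meets $\{u=0\}$ transversally at $F$. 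Therefore $\sigma_u|_{C_i}$ has exactly four distinct fixed points $F_1, F_2, F_3, F_4$ and $(F_1) + (F_2) + (F_3) + (F_4) \sim H$.

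Finally, I take $F_1$ as the origin of the group law on $C_i$. The unique nontrivial involution of an elliptic curve fixing the origin is $[-1]$, so $\sigma_u|_{C_i} = [-1]$ in this group law, and the $F_i$ are its fixed points, namely the $2$-torsion subgroup. The four elements of $(\Z/2)^2$ sum to zero, hence $4(F_1) \sim (F_1) + (F_2) + (F_3) + (F_4) \sim H$. For any $R \in C_i$, the identity $R + [-1]R = 0$ in the group law gives $(R) + (\sigma_u R) \sim 2(F_1)$. Doubling and setting $R = P$ yields $2(P) + 2(\sigma_u P) \sim 4(F_1) \sim H$, as required. The main technical step is the transversality of $\{u=0\} \cap C_i$ at each fixed point, which the eigenspace argument dispatches cleanly.
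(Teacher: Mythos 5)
Your proof is correct, and it reaches the crucial identity by a genuinely different route than the paper. Both arguments make the same first move: add $4(P)$ to both sides and use Corollary \ref{hyp} to replace $3(P)+(e_i(P))$ by the hyperplane class $H$, reducing everything to the claim $H \sim 2(P)+2(\sigma_u P)$ on $C_i$. The paper then reduces to $V_0'$ and $u=y$ by symmetry and simply exhibits an explicit plane, $(s^2+t^2)(w_0x-x_0w)-(s^2-t^2)(w_0z-z_0w)$ with $(s:t)=f_i(P)$, whose intersection with $C_i$ is $2(P)+2(\sigma_u(P))$; the verification is a suppressed computation. You instead analyze the involution $\sigma_u|_{C_i}$ intrinsically: since the coordinate vertex is not on $V$, its fixed locus on $C_i$ is exactly $C_i\cap\{u=0\}$; the eigenvalue argument at a fixed point (differential $-1$ on $T_FC_i$, hence $T_FC_i$ transverse to $\{u=0\}$) shows $H$ is the sum of four distinct fixed points; and identifying $\sigma_u|_{C_i}$ with $[-1]$ relative to one of them, say $F_1$, gives both $(R)+(\sigma_u R)\sim 2(F_1)$ for all $R$ and $4(F_1)\sim H$. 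Your version avoids the reduction to $V_0'$ and the unexplained explicit plane, works uniformly in all four coordinates, and as a byproduct essentially re-proves Proposition \ref{twotorsion} (the $F_j$ are the $2$-torsion points, so the $\sigma_{uv}$ act by translation by $2$-torsion); the cost is a somewhat longer conceptual setup. One minor remark: the initial case split at $u(P)=0$ is unnecessary, since when $P$ is one of the fixed points it is $2$-torsion for the origin $F_1$ and the relation $2(P)+2(\sigma_u P)=4(P)\sim 4(F_1)\sim H$ still follows from your general argument.
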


\begin{proof}
As this statement is purely geometric, we may assume $V=V_0'$. 
By Proposition \ref{permute}, we may assume that $f_i$ is given by 
$(x:y:z:w) \mapsto (x^2-z^2:y^2-w^2)$, and since $A_4$ acts transitively 
on the coordinates, also that $u=y$. 
%
%
Adding $4(P)$ to both sides
and using Corollary \ref{hyp} to identify $(e_i(P)) + 3(P)$ with
the hyperplane class on $C_i$,
we see that it is enough to find a hyperplane
whose intersection with $C_i$ is $2(P) + 2(\sigma_y(P))$.  
The plane $(s^2+t^2)(w_0x-x_0w) - (s^2-t^2)(w_0z-z_0w)$ has
this property, where $(s:t) = (x_0^2-z_0^2:y_0^2-w_0^2)$ is the
image of $P$ under $f_i$.
\end{proof}

\begin{prop} \label{twotorsion}
Assume $C_i$ to be irreducible. Then for any two coordinates $u,v$ 
of $\P^3$, the divisor $2(\sigma_{uv}(P))-2(P)$ on $C_i$ is principal.  
\end{prop}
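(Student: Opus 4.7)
The plan is to derive the statement as a quick consequence of Proposition \ref{phipistwo}. That proposition asserts $(e_i(P))-(P) \sim 2(\sigma_u P)-2(P)$ on $C_i$ for \emph{any} coordinate $u$ of $\P^3$. In particular the right-hand side does not depend on the choice of $u$, so applying the proposition once with a coordinate $u$ and once with a different coordinate $v$ and subtracting the two linear equivalences yields $2(\sigma_u P)-2(\sigma_v P) \sim 0$ on $C_i$. The case $u=v$ is anyway trivial, since then $\sigma_{uv}$ is the identity.

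To convert this into a statement about $\sigma_{uv}(P)$, I would exploit the fact that every element of $\mu$ preserves the fibration $f_i$. Indeed, $f_i = \pi_i \circ \tau$ factors through the squaring map $\tau$, so the sign changes in $\mu$ are killed by $\tau$, and $\sigma_u$ therefore restricts to an automorphism of the fibre $C_i$. This restriction induces an automorphism of $\Pic(C_i)$. Pushing the equivalence $2(\sigma_u P) \sim 2(\sigma_v P)$ forward by $\sigma_u$, and using $\sigma_u^2 = \mathrm{id}$ together with $\sigma_u\sigma_v = \sigma_{uv}$, immediately yields
\[
2(P) \sim 2(\sigma_{uv}(P)) \quad \text{on } C_i,
\]
which is exactly the assertion that $2(\sigma_{uv}(P))-2(P)$ is principal.

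I do not expect any serious obstacle; the argument is essentially formal once Proposition \ref{phipistwo} is in hand. The one point worth flagging is that Proposition \ref{phipistwo} was stated under the assumption that $C_i$ is nonsingular, whereas here we assume only that $C_i$ is irreducible. This mismatch is harmless because all singular fibres of $f_i$ are of type $I_4$, hence reducible, so an irreducible $C_i$ is automatically smooth and the hypothesis of Proposition \ref{phipistwo} is satisfied.
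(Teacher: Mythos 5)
Your proof is correct and is essentially the paper's argument: both rest on applying Proposition \ref{phipistwo} twice with different coordinates and cancelling, the only cosmetic difference being that the paper evaluates the proposition at the point $\sigma_u(P)$ directly, whereas you evaluate it at $P$ and then transport the resulting equivalence by the automorphism $\sigma_u$ of $C_i$. Your remark reconciling the ``irreducible'' hypothesis here with the ``nonsingular'' hypothesis of Proposition \ref{phipistwo} (via the $I_4$ classification of the singular fibres) is a valid point that the paper leaves implicit.
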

\begin{proof}
Twice applying Proposition \ref{phipistwo} to $\sigma_u(P)$, once with
the coordinate $u$ and once with $v$, we see that the divisors
$2(\sigma_{uv} (P)) - 2 (\sigma_u (P))$ and $2(P) - 2 (\sigma_u (P))$
are linearly equivalent.  The result follows immediately.
\end{proof}

\begin{prop}\label{fourtorsion}
Let $P=(x_0:y_0:z_0:w_0)$ and $V=V_0'$.  Let $C_1$ be the fibre
through $P$ of the fibration given by $(x:y:z:w) \mapsto (x^2+z^2:
y^2+w^2)$, and let $C_2$ be the other fibre.  For $k=1,2$, we let
$E_k$ be the elliptic curve $C_k$ with $P$ as origin, provided that
$C_k$ is nonsingular.  Let $i$ denote a fixed square root of $-1$.  If
$C_1$ is nonsingular, then the rational $4$-torsion points on $E_1$
are the $(\pm z_0: \pm w_0: \pm x_0: \pm y_0)$ with three $+$ signs
and one $-$ sign; the other $4$-torsion points are given by
\begin{eqnarray*}
(w_0: \pm iz_0: \pm iy_0: x_0),& (-w_0: \mp iz_0: \pm iy_0: x_0),\\
(y_0: \pm ix_0: \pm iw_0: -z_0),& (y_0: \pm ix_0: \mp iw_0: z_0).\\
\end{eqnarray*}
If $C_2$ is nonsingular, then on $E_2$, the $(\pm w_0, \pm z_0, \pm y_0, \pm x_0)$ with three
$+$ signs and one $-$ sign are all the rational $4$-torsion points; the
others are given by
\begin{eqnarray*}
(\pm iz_0: w_0: \pm ix_0: y_0),& (\pm iz_0: -w_0: \mp ix_0: y_0),\\
(\mp iy_0: x_0: \pm iw_0: z_0),& (\pm iy_0: -x_0: \pm iw_0: z_0).\\
\end{eqnarray*}
In particular, for each $k\in \{1,2\}$ there is a set $S_k$ of $12$ 
automorphisms of $V_0'$ defined over $\Q(i)$ such that, for $R$ in an
open subset of $V_0'$,
the class of the divisor $(e_k(R)) - (R)$ on the fibre of $f_k$ through 
$R$ is of exact order $4$ if and only if
$e_k(R) = \alpha(R)$ for some $\alpha \in S_k$.
\end{prop}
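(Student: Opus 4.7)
My plan is to identify the sixteen $4$-torsion points of $E_k$ as images $\alpha(P)$ of $P$ under a family of automorphisms $\alpha$ of $V_0'$ that preserve the fibration $f_k$ and act on each smooth fibre as translation by a $4$-torsion class; the twelve automorphisms whose translation class has exact order $4$ will then form $S_k$.

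The key observation is the following. Suppose $\alpha$ is an involution of $V_0'$ preserving $f_k$ and satisfying $\alpha^2 = \sigma_{uv}$ for some pair of distinct coordinates $u,v$. By Proposition \ref{twotorsion}, $\alpha^2$ acts on each smooth fibre $E_k$ as translation by a non-trivial $2$-torsion class, and such a translation is fixed-point-free; hence $\alpha$ is fixed-point-free on generic fibres as well. A fixed-point-free automorphism of an elliptic curve is a translation (the alternative, an inversion composed with a translation, would force $\alpha^2 = 1$, contradicting $\alpha^2 = \sigma_{uv} \ne 1$). So $\alpha$ acts on the generic fibre as translation by the class $(\alpha(P))-(P)$, whose double is $(\sigma_{uv}(P))-(P)$, giving a $4$-torsion class of exact order $4$.

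I would then exhibit enough such involutions to cover all twelve exact-order-$4$ classes. For the rational points, $\beta(x:y:z:w) = (-z:w:x:y)$ preserves $f_1$ and satisfies $\beta^2 = \sigma_{xz}$, so $\beta(P) = (-z_0:w_0:x_0:y_0)$ is a rational exact-order-$4$ point; composing $\beta$ with each of the three non-trivial $\sigma_{uv}$ yields the remaining three rational exact-order-$4$ points of the form $(\pm z_0:\pm w_0:\pm x_0:\pm y_0)$ with three $+$ signs and one $-$. For the non-rational points, $\gamma(x:y:z:w) = (w:iz:iy:x)$ satisfies $\gamma^2 = \sigma_{yz}$, and the quartic identity $(x^2+z^2)(x^2-z^2) = (w^2-y^2)(w^2+y^2)$ on $V_0'$ gives $f_1(\gamma(Q)) = (w^2-y^2:x^2-z^2) = (x^2+z^2:y^2+w^2) = f_1(Q)$, so $\gamma$ preserves $f_1$ and $\gamma(P) = (w_0:iz_0:iy_0:x_0)$ is a $4$-torsion point. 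Composing $\gamma$ with the $\sigma_{uv}$, and with the analogous involution obtained by swapping the roles of the $xz$- and $yw$-pairs, produces the remaining seven non-rational exact-order-$4$ points over $\Q(i)$ in the listed shapes.

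In total this produces $16$ candidates: $P$, the three $\sigma_{uv}(P)$, four rational exact-order-$4$ points, and eight non-rational exact-order-$4$ points. For $P$ with $x_0 y_0 z_0 w_0 \ne 0$ they are mutually distinct, and since $|E_k[4]| = 16$ they account for all of $E_k[4]$. Taking $S_k$ to be the twelve $\Q(i)$-defined automorphisms producing the exact-order-$4$ classes, the equivalence in the last sentence is immediate: for $R$ in the open set where everything is defined, $(e_k(R))-(R)$ has exact order $4$ iff it equals one of the twelve exact-order-$4$ translation classes, iff $e_k(R) = \alpha(R)$ for the corresponding $\alpha \in S_k$; the case $k=2$ is symmetric. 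The main obstacle is the careful bookkeeping of sign couplings ($\pm$ versus $\mp$) to match the listed shapes exactly, together with the repeated use of the quartic identity on $V_0'$ to verify that each $\gamma$-type involution preserves $f_k$ rather than swapping the two fibrations; each check is routine but easy to mishandle.
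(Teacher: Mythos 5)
Your argument is correct in substance but takes a genuinely different route from the paper's. The paper proves the proposition by exhibiting, for each listed point $S$, an explicit rational function (obtained by applying symmetries to the function used for Proposition \ref{phipistwo}/\ref{twotorsion}) whose divisor is $(P)+(\sigma_{uv}(P))-2(S)$, so that $2S=\sigma_{uv}(P)$ in the group law and $S$ has exact order $4$; completeness then follows from the count $4+12=16=\#E_k[4]$. You instead realize the sixteen $4$-torsion points as $\alpha(P)$ for sixteen linear automorphisms $\alpha$ of $V_0'$ that preserve each fibre of $f_k$ and square to the $\sigma_{uv}$, and show each such $\alpha$ restricts to a fibrewise translation by a class of exact order $4$. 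This is more conceptual, and it hands you the ``automorphism'' formulation in the last sentence of the proposition for free, whereas the paper must read that off afterwards. Two points need tightening. First, you call $\beta$ and $\gamma$ involutions while insisting $\alpha^2=\sigma_{uv}\neq 1$; they are automorphisms of order $4$. Second, your parenthetical justification that a fixed-point-free automorphism of an elliptic curve is a translation only excludes $Q\mapsto -Q+t$, which suffices for fibres of generic $j$-invariant but not for fibres with $j\in\{0,1728\}$; the statement is still true in general (for $Q\mapsto\psi(Q)+t$ with $\psi\neq 1$ the isogeny $\psi-1$ is surjective, so a fixed point exists), and you also need the observation that the fixed points of $\sigma_{uv}$ on $V$ lie in $\Omega$, hence only on singular fibres, to get fixed-point-freeness and nontriviality of the $2$-torsion class on every nonsingular fibre. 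With those repairs your argument covers every nonsingular fibre as the proposition requires, and the asserted distinctness of the sixteen points follows because distinct elements of $\mathrm{PGL}_4$ cannot agree on a curve spanning $\P^3$.
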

\begin{proof}
One proof consists of finding a Weierstrass form for $C_i$, finding 
all $4$-torsion points, and then pulling them back to our model.
Instead, we show directly that for each $S$ among the given points, 
the double $2S$ is one of the $2$-torsion points $\sigma_{uv}(P)$ 
given in Proposition \ref{twotorsion}.  Take for instance the point
$S=( z_0: w_0: x_0: - y_0)$ on $E_1$.  Applying $\sigma_y$ and then 
the appropriate permutation to the function $m/l$ in the proof of 
Proposition \ref{twotorsion}, we find that the function 
$$
g = \frac{z_0w_0(w_0x-z_0y)-x_0y_0(y_0z+x_0w)}
         {z_0^3x+w_0^3y-x_0^3z+y_0^3w}
$$
is regular everywhere, except for a double pole at $S$.  Set $\lambda = g(P)$, 
then $g-\lambda$ also vanishes at $\sigma_{xz}(P)$, so we have 
$(g-\lambda) = P+\sigma_{xz}(P) - 2S$, which shows that on $E_1$ we have 
$2S = \sigma_{xz}(P)$, so by Proposition \ref{twotorsion}, the point $S$ 
has order $4$.  The other points can be handled similarly.
\end{proof}

The simplicity of the formulas in Proposition \ref{fourtorsion}
reflects the well-known fact that the Mordell-Weil groups of the
Jacobians of these fibrations over $\Q(i)(t)$ are both isomorphic to
$\Z/4\Z \oplus \Z/4\Z$.

\begin{prop}\label{atmostfour}
Assume that $P$ is defined over $\Q$ and that
all its coordinates are nonzero. 
If $(e_i(P)) - (P)$ is a torsion divisor class on $C_i$, its order is at most~$4$.
\end{prop}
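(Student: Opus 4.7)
The plan is to produce four distinct $\Q$-rational halvings of the class of $(e_i(P))-(P)$ in $\mathrm{Jac}(C_i)$, deduce that the rational $2$-torsion is as large as possible, and then apply Mazur's theorem on torsion of elliptic curves over $\Q$.

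I would first assume that $C_i$ is smooth, so that its Jacobian $E$ is an elliptic curve over $\Q$. By Proposition~\ref{phipistwo}, for each coordinate $u$ of $\P^3$ the class $[(\sigma_u P)-(P)]$ is a $\Q$-rational halving of $[(e_i(P))-(P)]$ in $E$. Because all coordinates of $P$ are nonzero, the four points $\sigma_x(P),\sigma_y(P),\sigma_z(P),\sigma_w(P)$ on $C_i$ are pairwise distinct: an equality $\sigma_u(P)=\sigma_v(P)$ for $u\neq v$ would force both the $u$- and $v$-coordinates of $P$ to vanish. Since the halvings of any fixed element of $E$ form a torsor under $E[2]$, I conclude $|E[2](\Q)| \geq 4$, and hence $E[2](\Q)=(\Z/2\Z)^{2}$.

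Next I would invoke Mazur's theorem: any subgroup of $E(\Q)_{\mathrm{tors}}$ containing $(\Z/2\Z)^{2}$ must be of the form $\Z/2\Z \oplus \Z/2k\Z$ with $k\in\{1,2,3,4\}$. Assuming $[(e_i(P))-(P)]$ is torsion, its halvings $[(\sigma_u P)-(P)]$ are torsion too, so the whole picture lies inside $E(\Q)_{\mathrm{tors}}$; doubling this group yields a cyclic group isomorphic to $\Z/k\Z$ of order $k\leq 4$ that contains $[(e_i(P))-(P)]$, giving the desired bound.

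Finally, if $C_i$ is reducible (so $P$ lies on one of the $48$ lines), then $C_i$ is an $I_4$ fibre, and by Proposition~\ref{onlines} the points $P$ and $e_i(P)$ sit on nonadjacent components; the class $[(e_i(P))-(P)]$ then has nontrivial multidegree in $\Pic(C_i)$ and cannot be torsion at all, so the statement is vacuously true. The main obstacle is really the counting that yields $|E[2](\Q)|=4$; once it is in place, Mazur's theorem finishes the argument.
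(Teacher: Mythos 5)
Your proof is correct and follows essentially the same route as the paper: full rational $2$-torsion from the $\sigma_u$-translates (the paper cites Proposition~\ref{twotorsion}, which you in effect rederive from Proposition~\ref{phipistwo} via the torsor of halvings), then Mazur's theorem to get $\Z/2\Z\oplus\Z/2k\Z$ with $k\le 4$, then the observation that $(e_i(P))-(P)$ is twice a rational torsion class. Your separate treatment of the reducible $I_4$ fibre via multidegree is a reasonable addition that the paper's proof leaves implicit.
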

\begin{proof}
Since the coordinates of $P$ are all nonzero, the $2$-torsion subgroup of the 
Mordell-Weil group of the Jacobian of $C_i$ over $\Q$ has order $4$ by 
Proposition \ref{twotorsion}. By Mazur's theorem,
the torsion subgroup of $C_i$ is therefore
$\Z/2\Z \oplus \Z/2k\Z$ where $k \le 4$.  Accordingly, if $(e_i(P)) - (P)$ is
torsion, then, since it is a multiple of $2$ by Proposition
\ref{phipistwo}, its order is at most~$4$.
\end{proof}
Of course the order of $(e_i(P))-(P)$ is $1$ if and only if $e_i(P) = P$;
that is, by Proposition \ref{zerofix}, if and only if one of the coordinates
of $P$ is $0$.  In this case, $P$ is fixed by both endomorphisms $e_k$,
and the construction of this paper does not allow us to
exhibit infinitely many rational points on $V$.

\begin{prop}\label{ordtwo}
Assume that $C_i$ is nonsingular. 
Then the exact order of the divisor $(e_i(P)) - (P)$
is $2$ if and only if $P$ lies on a line.
\end{prop}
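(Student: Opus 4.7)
My plan is to reduce to $V = V_0'$ via $\iota'_{a,b,c,d}$, and then to combine Propositions~\ref{phipistwo}, \ref{twotorsion}, \ref{fourtorsion}, and~\ref{onlines}.

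For the direction $(\Leftarrow)$, suppose $P$ lies on a line $L \subset V$. Since $C_i$ is nonsingular, $L$ cannot be a component of $C_i$ and must therefore be an irreducible component of a singular fibre of the other fibration $f_j$. Applying Proposition~\ref{onlines} with the roles of $i$ and $j$ interchanged gives $e_i(P) = \sigma_{uv}(P)$ for some distinct coordinates $u,v$. By Proposition~\ref{twotorsion}, the divisor $(e_i(P))-(P) = (\sigma_{uv}(P))-(P)$ is $2$-torsion on $C_i$, and nontriviality holds because $\sigma_{uv}(P) = P$ would force both the $u$- and $v$-coordinates of $P$ to vanish, contradicting $P \notin \Omega$.

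For the converse, I would use Proposition~\ref{phipistwo} to translate the order-$2$ hypothesis: in the Jacobian of $C_i$, $[(e_i(P))-(P)] = 2[(\sigma_u(P))-(P)]$ for any coordinate $u$, so the exact-order-$2$ assumption forces $[(\sigma_u(P))-(P)]$ to have order exactly $4$ (an order dividing $2$ would render $(e_i(P))-(P)$ principal). By the explicit enumeration of $4$-torsion points in Proposition~\ref{fourtorsion}, $\sigma_u(P)$ must coincide projectively with one of the listed twelve order-$4$ points on the relevant fibre. For each such equality, a direct comparison of coordinates yields polynomial equations cutting out a line on $V$. For example, $\sigma_x(P) = (-z_0 : w_0 : x_0 : y_0)$ forces $x_0^2 = z_0^2$ and $y_0^2 = w_0^2$ with matching signs, so $P$ lies on one of the lines $x = \pm z,\ y = \pm w$; cases involving the $\Q(i)$-valued order-$4$ points force $P$ onto lines of $V_0'$ defined over $\Q(i)$.

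The main obstacle is this converse case analysis: although each individual comparison is an elementary projective equality, enumerating all twelve is tedious. I would mitigate this with $G_0$-equivariance. By Propositions~\ref{sigmacomm} and~\ref{permute}, the group $G_0$ commutes with $e_i$ and acts transitively on the $24$ lines in $\Lambda_j$; moreover its action on $V$ permutes the $4$-torsion points listed in Proposition~\ref{fourtorsion} compatibly with its action on $\Lambda_j$. Verifying one representative per $G_0$-orbit therefore suffices, reducing the bookkeeping to a small number of coordinate comparisons.
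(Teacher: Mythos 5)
Your forward direction is essentially the paper's: Proposition~\ref{onlines} with the roles of $i$ and $j$ interchanged gives $e_i(P)=\sigma_{uv}(P)$, and Proposition~\ref{twotorsion} gives $2$-torsion. (One small imprecision: projectively, $\sigma_{uv}(P)=P$ forces \emph{either} the $u$- and $v$-coordinates \emph{or} the other two coordinates to vanish, depending on the scalar; either way $P\in\Omega$, so your conclusion stands. The paper instead gets $e_i(P)\neq P$ from Proposition~\ref{zerofix}, since a point on a line has no zero coordinate.) For the converse you take a genuinely different route. The paper never touches the $4$-torsion list: it counts. Each of the $24$ lines in fibres of $f_j$ meets $C_i$ in two distinct points (Corollary~\ref{otherpoint} and Lemma~\ref{nointerference}), giving $48$ points $S$ with $2(e_i(S))\sim 2(S)$; the $16$ coordinate-plane points of $C_i$ give $16$ more by Proposition~\ref{zerofix}; and by Corollary~\ref{hyp} the condition $2(e_i(S))\sim 2(S)$ is equivalent to $8(S)\sim 2H$, which has exactly $\deg[8]=64$ solutions on the Jacobian, so the $64$ points already exhibited are all of them and those of exact order $2$ are precisely the line points. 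Your route --- exact order $2$ forces $(\sigma_u(P))-(P)$ to have exact order $4$ by Proposition~\ref{phipistwo}, hence $\sigma_u(P)$ is one of the twelve points of Proposition~\ref{fourtorsion}, and each of the twelve projective equalities cuts out lines of $V_0'$ --- is also sound (the sample comparisons do come out as lines, including the $\Q(i)$-rational cases), but it costs a case analysis, relies on the completeness of the list in Proposition~\ref{fourtorsion}, and your $G_0$-reduction needs the mild extra observation that the torsion formulas are universal in $P$ so that $G_0$ carries the list for $(C_k,P)$ to the list for $(gC_k,gP)$. What your version buys is explicitness: it identifies, line by line, which automorphism realizes $\sigma_u(P)$ as a $4$-torsion point, whereas the paper's degree count is shorter and case-free.
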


\begin{proof}
Suppose that $P$ lies on a line $L$.  Then 
none of its coordinates is $0$, as the intersection of any line with 
any coordinate plane is contained in $\Omega$. By Proposition \ref{zerofix}
this implies $e_i(P) \neq P$, so $e_i(P)$ and $P$ are not linearly 
equivalent, as no two different points are linearly equivalent to each 
other on a curve of positive genus. The line $L$ is a component of $C_j$
with $j \neq i$, so by Proposition \ref{onlines}
we have $e_i(P) = \sigma_{uv}(P)$ for some coordinates $u,v$ of $\P^3$. 
By Proposition \ref{twotorsion} we find immediately that $(e_i(P))-(P)$
has order $2$, thus proving one implication.

Note that each $L$ of the $24$ lines in the fibres of $f_j$ intersect 
$C_i$ in two points by Corollary \ref{otherpoint}, which are different 
by Lemma \ref{nointerference}. This already gives $48$ points $S$ on 
$C_i$ with $2(e_i(S))\sim 2(S)$. By Proposition \ref{zerofix},
the $16$ intersection points of $C_i$ with the coordinate planes also 
satisfy $2(e_i(S))\sim 2(S)$, so that is $64$ points total. Note 
that $3(S)+e_i(S)$ is linearly equivalent to a hyperplane section $H$ 
on $C_i$ for every point $S$ on $C_i$ by Corollary \ref{hyp}, so we have 
$2(e_i(S))\sim 2(S)$ if and only if $8(S)$ is linearly equivalent to $2H$.
Since multiplication by $8$ on the Jacobian of $C_i$ has degree $64$, 
there are no points $S$ with $8(S) \sim 2H$ other than the ones already
described. This proves the converse. 
\end{proof}

\section{Proof of the main theorem}

\begin{define}
Let $k$ be a positive integer and $i \in \{1,2\}$.  Define $T_{ki}$ to
be the closure of the locus of points $P$ on $V$ for which $C_i$, the
fibre of $f_i$ through $P$, is nonsingular and where $(e_i(P)) - (P)$ 
is a divisor of exact order $k$ on $C_i$.
\end{define}

In these terms, Proposition \ref{zerofix} states that $T_{11} = T_{12}$
is the intersection of $V$ with the union of the coordinate planes.
Likewise, in Proposition \ref{ordtwo} we showed that $T_{21} \cup
T_{22}$ is contained in the union of the lines on $V$.
Therefore, if a point $P$ satisfies the hypotheses of our main
theorem (Theorem \ref{main}, repeated below as Theorem \ref{main2}), it does not lie in $T_{ki}$ for any
$k, i \in \{1,2\}$.

By Corollary \ref{orderthree}, we have that $T_{3i}$ is the closure of 
$e_i^{-1}(T_{1i})
\setminus T_{1i}$.  Now, $T_{1i}$ consists of the intersections of the
coordinate planes with $V$.  Calling these $X_1, X_2, X_3, X_4$, we
find that $e_i^{-1}(X_j) = X_j \cup Y_{ij}$, where the $Y_{ij}$ are
geometrically irreducible.  In other words, each $T_{3i}$ is the union
of four irreducible components.  Also, as in Proposition
\ref{fourtorsion}, the exact order of $e_i(P) - P$ is $4$ if and only
if $e_i(P) = \alpha(P)$, where $\alpha$ ranges over a certain set
of $12$ automorphisms.  So $T_{4i}$ is the union of $12$ such curves,
which are in fact geometrically irreducible.

\begin{theorem}\label{genus}
The set $T_{3i}$ is
the union of geometrically irreducible curves of geometric genus $41$, while
$T_{4i}$ is the union of geometrically irreducible curves of geometric
genus $13$.
\end{theorem}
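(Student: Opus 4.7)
The plan is to realise each irreducible component of $T_{3i}$ and $T_{4i}$ as a finite cover of a curve of small genus and invoke Riemann--Hurwitz, controlling the ramification through the fibration $f_i$.

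For $T_{3i}$: the natural morphism is $\rho_{ij} := e_i|_{Y_{ij}} \colon Y_{ij} \to X_j$, where $X_j = V \cap \{x_j = 0\}$ is a smooth diagonal plane quartic of genus $3$. I first compute $\deg \rho_{ij}$: since $e_i$ preserves $f_i$ and on each smooth fibre $C$ of $f_i$ Corollary \ref{hyp} gives $(e_i(P)) \sim H - 3(P)$, the map $e_i|_C$ is (a translate of) multiplication by $-3$ on the elliptic curve $C$, a map of degree $9$, and by Proposition \ref{zerofix} each of the four points of $X_j \cap C$ is fixed by $e_i$, so the nine preimages of such a point split as itself plus eight points on $Y_{ij}$. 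Hence $\deg \rho_{ij} = 8$. Geometric irreducibility of $Y_{ij}$ then follows from the transitive action on the eight generic preimages of the stabiliser of $x_j$ in $G$, combined with reducedness visible from (\ref{abcdeqs}). Riemann--Hurwitz now reads
\[
2g(Y_{ij}) - 2 \;=\; 8(2\cdot 3 - 2) + \deg R_{ij} \;=\; 32 + \deg R_{ij},
\]
so $g(Y_{ij}) = 41$ is equivalent to $\deg R_{ij} = 48$.

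For $T_{4i}$: by the discussion after Proposition \ref{fourtorsion}, its twelve components are the curves $Z_\alpha = \{P \in V : e_i(P) = \alpha(P)\}$ for $\alpha$ in the specified set. Here I would use $f_i \colon Z_\alpha \to \P^1$: on each smooth fibre $C$ the condition $e_i(P) = \alpha(P)$ pins down $(e_i(P))-(P)$ to a specific primitive $4$-torsion class in $\Pic(C)$, and since $(e_i(P))-(P) \sim H - 4(P)$ on $C$ by Corollary \ref{hyp}, this determines $4(P)$ in $\Pic(C)$ and hence $P$ up to $C[4]$, giving a cover of degree $16$. Riemann--Hurwitz then reduces $g(Z_\alpha) = 13$ to $\deg R_\alpha = 56$.

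The main obstacle in both cases is the precise ramification computation at the six singular $I_4$-fibres of $f_i$, which is where both covers can fail to be \'etale. On the smooth locus of such a fibre the lift of $e_i$ is multiplication by $-3$ on an extension of $\Z/4\Z$ by $\mathbb{G}_m$, acting as the identity on the component group (since $-3 \equiv 1 \pmod 4$) and as the degree-$3$ isogeny $t \mapsto t^{-3}$ on each $\mathbb{G}_m$; I expect a careful local analysis of this picture (together with the collapse of some smooth-locus preimages onto the nodes of the $I_4$) to account for $\deg R_{ij} = 48$ and $\deg R_\alpha = 56$ on the nose. As an independent check I would carry out the computation on $V_0'$ by computer algebra: list the singular points of $Y_{ij}$ and $Z_\alpha$ as curves in $\P^3$ with their $\delta$-invariants, and subtract from the arithmetic genera on the K3 surface given by adjunction, $p_a(C) = \tfrac12 C^2 + 1$. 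The resulting geometric genera transfer to every $V_{a,b,c,d}$ via the isomorphism $\iota_{a,b,c,d}$, since the statement is purely geometric.
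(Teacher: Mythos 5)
Your framework is sound in outline and genuinely different from the paper's: the paper simply writes down the components of $T_{3i}$ and $T_{4i}$ explicitly from the equations (\ref{abcdeqs}) and has Magma compute the geometric genus of each one, whereas you reduce the genera to a Riemann--Hurwitz count. Your degree computations are correct: on a smooth fibre $C$ the map $e_i|_C$ is a translate of $[-3]$, so $Y_{ij}\to X_j$ has degree $8$ and $Z_\alpha\to\P^1$ has degree $16$; both covers are unramified away from the six $I_4$ fibres because $[-3]$ and $[4]$ are \'etale on an elliptic curve in characteristic $0$; and the target ramification degrees $48$ and $56$ are exactly what the arithmetic demands.

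The proof nevertheless has a genuine gap: those two ramification degrees \emph{are} the theorem, and you explicitly defer their computation (``I expect a careful local analysis \dots to account for \dots''). Nothing in the proposal establishes them, and the local analysis is harder than your sketch suggests. The curve $X_j$ meets each singular fibre of $f_i$ only at nodes of the $I_4$ configuration, i.e.\ at points of $\Omega$ where two coordinates vanish, and by Remark \ref{noextend} the map $e_i$ does not extend to $\Omega$ at all; so over those points the cover $\tilde Y_{ij}\to X_j$ is not governed by ``$[-3]$ acting on an extension of $\Z/4\Z$ by $\mathbb{G}_m$'' on the smooth locus, but requires resolving the indeterminacy of $e_i$ and normalizing $Y_{ij}$ (and similarly for $Z_\alpha$ where it meets the singular fibres). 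The irreducibility argument is also unsubstantiated: the $8$ extra preimages of a generic $Q\in X_j$ form the nonzero part of a torsor under $C[3]$, so transitivity is a statement about the mod-$3$ monodromy of the fibration $f_i$ acting transitively on $C[3]\setminus\{0\}$, not about the finite stabiliser of $x_j$ in $G$, which has no evident reason to permute these $8$ points transitively. Your proposed fallback---computing singularities and $\delta$-invariants by computer algebra on $V_0'$ and transporting the answer via $\iota_{a,b,c,d}$---is essentially the paper's actual proof, but you have not carried it out either. To close the argument you must either complete the ramification analysis at the six singular fibres together with the monodromy argument, or actually perform the explicit computation.
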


\begin{proof} 
From the above, we can compute the $T_{3i}$ and $T_{4i}$ explicitly.
Using Magma, it is easy to verify that the geometric genus of each 
of the $12$ components of each $T_{4i}$ is $13$
and that the geometric genus of each of the $4$ components of each
$T_{3i}$ is $41$.
\end{proof}

\begin{prop}\label{notbothtor}
The intersection $(T_{31} \cup T_{41}) \cap (T_{32} \cup T_{42})$
does not contain any rational points outside the coordinate planes
and the $48$ lines on $V$.
\end{prop}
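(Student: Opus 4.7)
The plan is to reduce the proposition to a finite explicit computation. I would first establish that $T_{31}\cup T_{41}$ and $T_{32}\cup T_{42}$ share no irreducible component, so that their intersection is zero-dimensional. Each component is cut out by an explicit algebraic condition: by the discussion preceding Theorem \ref{genus}, $T_{3i}$ is the union of the four curves $\overline{e_i^{-1}(X_j)\setminus X_j}$, where $X_1,\ldots,X_4$ are the intersections of $V$ with the coordinate planes, while by Proposition \ref{fourtorsion}, $T_{4i}$ is the union of the twelve curves $\{P\in V:e_i(P)=\alpha(P)\}$ for $\alpha$ in the set $S_i$ of $12$ automorphisms. A direct check in Magma verifies that no component of $T_{31}\cup T_{41}$ coincides with a component of $T_{32}\cup T_{42}$; heuristically this is to be expected because the condition defining a component of $T_{3i}\cup T_{4i}$ involves only the fibration $f_i$, while the $f_j$-fibre (for $j\neq i$) varies in a transverse direction.

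Having this, the intersection is a finite set of geometric points, and the task is to identify the rational ones. There are at most $16\times 16=256$ pairs of components to intersect, but the transitive $G_0$-action on each $\Lambda_i$ together with the involution swapping the two fibrations (coming from any odd element of $S_4$, via Proposition \ref{permute}) reduces this to a handful of representative pairs. For each representative pair I would compute the zero-dimensional intersection scheme in $V$ directly from the formulas (\ref{abcdeqs}) and Proposition \ref{fourtorsion}, determine its rational points as a function of $a,b,c,d$, and check that every such point lies on a coordinate plane or on one of the $48$ lines on $V$.

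The main obstacle is the size of the symbolic computation: the defining polynomials have high degree in the coordinates and involve the parameters $a,b,c,d$ and $N=\pm\sqrt{abcd}$. Careful organization is therefore important. One natural approach is to first compute the geometric intersection points on $V_0'$ (where the components are defined over small fields and several of the automorphisms in $S_i$ are defined over $\Q(i)$), classify them under the $G$-action, and only then transfer back to $V_{a,b,c,d}$ via $\iota_{a,b,c,d}$ to determine which orbit representatives can give rise to $\Q$-rational points on $V_{a,b,c,d}$. Once the list of candidate orbits is small, the verification that every $\Q$-rational representative actually lies on the exceptional locus reduces to substituting into the equations of the coordinate planes and the $48$ lines.
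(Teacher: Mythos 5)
Your overall strategy---reduce to a zero-dimensional intersection, compute the geometric points on a fixed model, classify them under the $G$-action, and transfer back via $\iota_{a,b,c,d}$---is exactly the paper's route up to the last step. But the last step as you state it is where the argument breaks. You assert that once the candidate orbits are listed, it only remains to check that every $\Q$-rational representative lies on a coordinate plane or one of the $48$ lines. The computation does not cooperate: on $V_0$ there is a $G$-orbit of intersection points, namely that of $(\eta:1:1:1)$ with $\eta^4=-3$, which lies on no coordinate plane and no line, and whose coordinates have fourth powers with rational ratios. So the necessary condition you implicitly use (that $\iota_{a,b,c,d}(P)$ has rational fourth-power ratios when $P$ is rational) does \emph{not} eliminate this orbit, and substituting into the equations of the planes and lines will not dispose of it either.

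Ruling out this orbit is the genuinely arithmetic part of the proof and requires an extra idea you do not supply: the paper fixes an extension $v_3$ of the $3$-adic valuation to the field $K=\Q(i,\alpha,\beta,\gamma,\delta,\eta)$ generated by the chosen fourth roots and $\eta$, and considers $\nu(R)=\sum_k v_3(r_k)\in\Z/4\Z$ for a point $R=(r_0:r_1:r_2:r_3)$. This is well defined, $G$-invariant, and vanishes on points defined over $\Q$; but for a rational $P$ mapping into the orbit of $(\eta:1:1:1)$ one computes $\nu(P)=v_3(\eta/(\alpha\beta\gamma\delta))$, which is odd because $v_3(\eta)=1$ while $v_3(\alpha\beta\gamma\delta)$ is even (this is where the hypothesis $abcd\in(\Q^*)^2$ enters). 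The resulting contradiction is what finishes the proof. Without some argument of this kind---crucially using that $abcd$ is a square---your proposal is incomplete: it would leave open the possibility of a rational point of the intersection, off the planes and lines, on surfaces $V_{a,b,c,d}$ whose coefficients make $(\eta:1:1:1)$ ``descend'' to $\Q$.
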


\begin{proof}
Suppose $P$ is a rational point in the given intersection
that is not on any of the coordinate planes, and set 
$Q = \iota_{a,b,c,d}(P) \in V_0$. Then the ratios of the fourth 
powers of the coordinates of $Q$ are rational. 
By computer calculation, we will see that 
up to the action of $G$, there is only one such point on $V_0$
with this property.
Working on $V_0$, we
intersect every component of $T_{31}$ and $T_{41}$ with every
component of $T_{32}$ and $T_{42}$, 
and remove extraneous points with one of the
coordinates $0$ or that lie on a line.
Then we resolve these schemes into primary components
over $\Q$. Since all of the intersections
have dimension $0$, we may use Magma to find all the $\Qbar$-points on these 
components. The only points for which the fourth powers of the coordinates
have rational ratios are in the orbit under $G$ (acting on $V_0$) of 
$(\eta : 1 : 1: 1)$ with $\eta^4 = -3$, so $Q$ is one of these points,
defined over $\Q(i,\eta)$, where $i$ denotes a square root of $-1$,
and $P$ is its inverse image under $\iota_{a,b,c,d}$. 
Let $\alpha,\beta,\gamma,\delta$ be the fourth roots of $a,b,c,d$ respectively 
that determine $\iota_{a,b,c,d}$, viewed as elements of a field
$K = \Q(i,\alpha,\beta, \gamma,\delta,\eta)$.
Fix an extension $v_3$ of the $3$-adic valuation of $\Q$ to $K$.
Note that $\eta$ is a uniformizer for
$v_3$; we normalize so that $v_3(\eta) = 1$.  Given a $K$-point $R
= (r_0:r_1:r_2:r_3)$ of $V$, let $\nu(R) = \sum_{i=0}^3 v_3(r_i)$
viewed as an element of $\Z/4\Z$.  It is clear that $\nu(R)$ is
well-defined, $G$-invariant, and $0$ if $R$ is defined over $\Q$.
However, $\nu(P) = v_3(\eta/(\alpha \beta \gamma \delta))$, and
from the fact that $abcd$ is a square it follows that
$v_3(\alpha \beta \gamma \delta)$ is even.  This is a contradiction,
because $v_3(\eta) = 1$ and $\nu(P)=0$.
We conclude that such a $P$ does not exist. 
\end{proof}

We are now ready to prove the main theorem, repeated here.

\main{main2}

\begin{proof} For $i=1,2$, let $C_i$ denote the fibre of $f_i$ through $P$,
endowed with the structure of an elliptic curve with $P$ as the origin.
By assumption, $e_i(P)$ does not have order $1$ or $2$ on either $C_i$.  
That being so, Proposition \ref{notbothtor} assures us
that for some $i$ the order of $e_i(P)$ is infinite.  Say (without
loss of generality) that this $i$ is 1. Then the subgroup $S$ of $C_1(\R)$ 
generated by $e_1(P)$ and the $2$-torsion points is infinite and, in fact,
dense in the real analytic topology. 
For each point $Q$ in $S$, consider the divisor class
$(e_2(Q))-(Q)$ on the fibre of $f_2$ passing through $Q$.  Its order is
$1$ or $2$ finitely often, by Propositions \ref{zerofix} and
\ref{ordtwo}; by Theorem \ref{genus} it is $3$ or $4$ finitely often,
because $C_1$ does not have genus $41$ or $13$ and so cannot be one of
the curves on which the order of $(e_2(R))-(R)$ is $3$ or $4$ for all $R$.
(It is not necessary to use Faltings' theorem here.)  
In other words, there are only finitely many points $Q$ in $S$ for which 
the fibre of $f_2$ through $Q$ contains only finitely many rational points. 
If $R\in S$ is not one of these finitely many points, then similarly the 
set of rational points on the fibre of $f_2$ through $R$ is infinite and 
dense in the real analytic topology.
Of course $C_1$ meets any fibre of $f_2$
in only finitely many points, so there are infinitely many distinct
fibres of $f_2$
with infinitely many rational points.
Zariski density follows. 

Now we treat the real analytic topology. 
If $a,b,c,d$ were all of the same sign, then $V=V_{a,b,c,d}$ would not
have any real points, so we conclude that not all of them have the same 
sign. Since $abcd$ is a nonzero square, it is positive, so two of $a,b,c,d$
are positive and two are negative. Without loss of generality, we assume 
that $a,d>0$ and $b,c<0$, and we choose real $\alpha,\beta,\gamma,\delta$ 
such that $\alpha^4=a$, $\beta^4=-b$, $\gamma^4=-c$, and $\delta^4=d$. 
Then over $\R$, one of the elliptic fibrations, say $f_1$, is given by 
$(x:y:z:w) \mapsto (\alpha^2x^2-\beta^2y^2 : \gamma^2 z^2-\delta^2 w^2)
= (\gamma^2 z^2+\delta^2 w^2:\alpha^2x^2+\beta^2y^2)$, 
up to a linear automorphism of $\P^1$. The fibration $f_2$ can be given 
by $(x:y:z:w) \mapsto (\alpha^2x^2-\beta^2y^2 : \gamma^2 z^2+\delta^2 w^2)
= (\gamma^2 z^2-\delta^2 w^2:\alpha^2x^2+\beta^2y^2)$. Let $L$ be the line 
defined by $\alpha x = \beta y$ and $\gamma z = \delta w$. Then $L$ is 
contained in the fibre of $f_2$ above $(0:1)$ and it is easy to check that
$L(\R)$ maps surjectively to $f_1(V(\R)) \subset \P^1(\R)$. 

We now show that there exists 
a nonempty open subset $A$ of $V(\R)$ in which the subset of rational points
is dense. The locus of points on $V$ where $f_1$ and $f_2$ do not give 
local parameters is of codimension $1$. Since the set of rational points 
is Zariski dense, we can choose a rational point $Q$ not on a line or 
a coordinate plane such that $f_1$ and $f_2$ give local parameters at $Q$. 
We choose $Q$ such that the set of rational points on the fibre $F$
of $f_1$ through $Q$ is dense in $F(\R)$ as well. 
Let $A\subset V(\R)$ be an open neighbourhood of $Q$ and $J_1,J_2 \subset 
\P^1(\R)$ connected open subsets such that the map $f=(f_1,f_2) \colon 
A \rightarrow J_1\times J_2$ is a homeomorphism. It suffices to show that 
$f(A\cap V(\Q))$ is dense in $J_1 \times J_2$. Set $s_i=f_i(Q)$, so that 
$f(Q) = (s_1,s_2)$.  Choose $(r_1,r_2) \in J_1\times J_2$. 
Since the rational points on $F$ are dense in $F(\R)$, following the 
proof of the density of rational points in the Zariski topology,
we can choose a rational $t_2\in J_2$, 
\footnote{it said $t_2 \in J_1$ before, but I think that was a typo,
also changed beginning of sentence}
arbitrarily close to $r_2$, such 
that $(s_1,t_2) = f(R)$ for some $R\in F(\Q)$ for which the rational 
points in the fibre $G$ of $f_2$ through $R$ are dense. Therefore, 
there is a $t_1$, arbitrarily close to $r_1$, such that $(t_1,t_2)=f(T)$ 
for some $T \in G(\Q)$. Since we can $(t_1,t_2)$ arbitrarily close to 
$(r_1,r_2)$, it follows that $V(\Q) \cap A$ is dense in $A$. 
The following diagram depicts the remainder of the argument.

\begin{figure}[htbp]
\begin{center}

\input{abcd.pstex_t}

\end{center}
\end{figure}

Let $I\subset \P^1(\R)$ be a nonempty connected open subset contained 
in $f_1(A)=J_1$. Suppose $B$ is a nonempty open subset of $V(\R)$ and let 
$J \subset \P^1(\R)$ be a connected open subset contained in $f_1(B)$. 
Since $f_1(L) = f_1(f_2^{-1}((0:1)))$ contains $I$ and $J$, for 
$t\in \P^1(\R)$ close enough to $(0:1)$ the set $f_1(f_2^{-1}(t))$ 
intersects both $I$ and $J$ nontrivially. Choose such a $t$ close to 
$(0:1)$, let $G_t$ denote the fibre $f_2^{-1}(t)$, and choose a nonempty 
connected open subset $I' \subset \P^1(\R)$ contained in 
$I \cap f_1(G_t(\R))$.
Since $V(\Q) \cap A$ is dense in $A$, we may choose $Q \in V(\Q) \cap A$
such that $a = f_1(Q)$ is contained in $I'$ and the set of rational points 
on the fibre $F_a = f_1^{-1}(a)$ is dense in $F_a(\R)$; moreover, such that 
the set $S$ of those rational points $R$ on $F_a$ for which the set of 
rational points on the fibre of $f_2$ through $R$ is dense, is itself dense 
in $F_a(\R)$. Since $a \in I'$ is contained in $f_1(G_t(\R))$, there is an
$X \in G_t(\R)$ with $f_1(X)=a$, so we can find $R \in S \subset F_a(\Q)$ 
such that $R$ is arbitrarily close to $X$ and thus $b=f_2(R)$ is arbitrarily 
close to $t$. Since $f_1(G_t(\R))$ intersects $J$ nontrivially, we may 
choose $R$ so close to $X$ that also $f_1(G_b(\R))$ intersects $J$ 
nontrivially,
with $G_b = f_2^{-1}(b)$. Since the set of rational points on $G_b$ is 
dense in $G_b(\R)$, we can find a point $T \in G_b(\Q)$ such that 
$c = f_1(T)$ is contained in $J$; 
moreover, we can pick $T$ so that the set of rational 
points on the fibre $F_c = f_1^{-1}(c)$ is dense in $F_c(\R)$.
Since $F_c(\R)$ intersects $B$ nontrivially and $F_c(\Q)$ is dense in 
$F_c(\R)$, we conclude that $B$ contains at least one rational point. 
Thus, any nonempty open subset of $V(\R)$ contains at least one rational 
point, and we conclude that $V(\Q)$ is dense in $V(\R)$. 
%
\end{proof}

The second theorem from the introduction, also repeated here, 
follows almost immediately.

\second{second2}

\begin{proof}
The surface $V_{a,b,c,d}$ contains the point $P=(1:1:1:1)$, which does 
not lie on a coordinate plane. Each of the $48$ lines on $V$ is contained 
in one of the sets $ax^4+by^4=0$, $ax^4+cz^4=0$, or $ax^4+dw^4=0$. Since 
no two of $a,b,c,d$ sum to zero, the point $P$ does not lie on any of the 
lines. By Theorem \ref{main}, the set of rational points 
of the surface is dense in both the Zariski and the real 
analytic topology.
\end{proof}

\begin{remark}
Theorem \ref{second} is included to give a large family of 
surfaces for which we can prove unconditionally that the set 
of rational points is dense. Each surface $V=V_{a,b,c,d}$ 
with $a+b+c+d=0$ contains the point $P=(1:1:1:1)$ and if $N^2=abcd$,
then $V$ also contains the less trivial point $Q=(x:y:z:w)$ with 
\begin{align}
x&=(3bc+ad)(a+d) + 4N(b-c),\nonumber\\
y&=(3ac+bd)(b+d) + 4N(c-a),\label{abcdeqsnorm}\\
z&=(3ab+cd)(c+d) + 4N(a-b),\nonumber\\
w&=-d(ab+ac+bc) - 9abc,\nonumber
\end{align}
which equals $e_i(P)$ for some $i\in \{1,2\}$ by (\ref{abcdeqs}).
Theorem \ref{second} appears weaker than Theorem \ref{main} 
because of the condition $a+b+c+d = 0$, but in fact Theorem \ref{main} 
follows directly from Theorem \ref{second}. Indeed, given a 
point $P'=(x_0:y_0:z_0:w_0)$ on $V'=V_{a',b',c',d'}$, the map 
$(x:y:z:w) \mapsto (x_0^{-1}x:y_0^{-1}y:z_0^{-1}z:w_0^{-1}w)$
sends $P'$ to $P=(1:1:1:1)$ and 
induces an isomorphism $\tau_{P'}$ from $V'$ to $V=V_{a,b,c,d}$
with $a=a'x_0^4$, $b=b'y_0^4$, $c=c'z_0^4$, and $d=d'w_0^4$,
satisfying $a+b+c+d=0$. The point $P'$ lies on a line in $V'$ 
if and only if $P$ lies on a line in $V$, which is the case if 
and only if two of $a,b,c,d$ sum to $0$. 
In a conversation, Andrew Granville reduced (\ref{abcdeqs}) to 
the equations in (\ref{abcdeqsnorm}) and noted that the 
endomorphism $e_i$ on $V'$ can be recovered from these 
simpler formulas, as we have $e_i(P') = \tau_{P'}^{-1}(Q)$.
\end{remark}

\begin{remark}
Without reference to the real analytic topology, Theorem \ref{main}
and its proof also apply to rational function fields over $\Q$. Take, 
for instance, the function field 
$K = \Q(a,b,c)$, set $d=-a-b-c$, and define $L=K[x]/(x^2-abcd)$. 
Then, as in Theorem \ref{second}, 
we find that $V_{a,b,c,d}(L)$ is Zariski dense in $V_{a,b,c,d}$.
\end{remark}

\section{General number fields}

Theorem \ref{main} does not generalize immediately to number fields,
as Mazur's theorem does not either. 
Samir Siksek pointed out to us that one can prove the following 
statement for general number fields. Note that Definition \ref{iota}
applies to any number field.

\begin{theorem}
There exists a Zariski open subset $U \subset V_{1,1,1,1}$, such that 
for each number field $K$ there exists an integer $n$, such that 
for all $a,b,c,d \in K^*$ with $abcd \in {K^*}^2$, if 
$\iota_{a,b,c,d}^{-1}(U) \subset V=V_{a,b,c,d}$ contains more than 
$n$ points over $K$, then the set of $K$-rational points on $V$ is 
Zariski dense. 
\footnote{proof adjusted to lose silly and unnecessary factor $2\cdot 16$ in $n$.
please check carefully. }
\end{theorem}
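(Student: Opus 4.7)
The plan is to follow the proof of Theorem \ref{main}, replacing Mazur's theorem by Merel's theorem, which gives a uniform bound $M = M(K)$ on the size of the torsion subgroup of any elliptic curve defined over $K$. Take $U \subset V_0$ to be the complement of the four coordinate planes, the $48$ lines, and a fixed finite union of ``bad'' fibers of $f_1$ and $f_2$ (discussed below); this is a Zariski open subset depending only on $V_0$. Then $\iota_{a,b,c,d}^{-1}(U) \subset V = V_{a,b,c,d}$ is the corresponding open subset.

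For each $i \in \{1,2\}$ and each integer $3 \le k \le M(K)$, define $T_{ki}\subset V_0$ as the Zariski closure of the locus of $P$ at which $(e_i(P))-(P)$ has exact order $k$ on the $f_i$-fiber through $P$. Since the Mordell-Weil torsion of the generic fiber of $f_i$ over $\Qbar(t)$ is a fixed finite group (bounded by a constant independent of $K$), the variety $T_{ki}$ has dimension at most $0$ for all sufficiently large $k$; for the remaining finitely many $k$ the dimension is handled as in the proof of Theorem \ref{main}. In every case the geometric intersection $T_{k_1,1} \cap T_{k_2,2}$ is $0$-dimensional. Set
$$n = \sum_{3 \le k_1, k_2 \le M(K)} \#(T_{k_1,1} \cap T_{k_2,2})(\Qbar),$$
a finite integer depending only on $K$.

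By Propositions \ref{zerofix} and \ref{ordtwo}, for every $P \in \iota^{-1}(U)(K)$ the divisor $(e_i(P))-(P)$ has order at least $3$ or is non-torsion; combined with Merel's bound and the injectivity of $\iota_{a,b,c,d}$ on $\Qbar$-points, this shows that at most $n$ points $P \in V(K)\cap \iota^{-1}(U)$ satisfy that both $(e_1(P))-(P)$ and $(e_2(P))-(P)$ are torsion. Suppose now $V(K)\cap \iota^{-1}(U)$ has more than $n$ points. Then some $P$ has, without loss of generality, $(e_1(P))-(P)$ of infinite order on $C_1$; the iterates $\{e_1^m(P) : m \ge 0\}$ form an infinite subset of $C_1(K)$, and since $P$ avoids the bad fibers, $C_1$ is not a component of any $\iota^{-1}(T_{k,2})$. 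Hence for all but finitely many iterates $Q$, the divisor $(e_2(Q))-(Q)$ is non-torsion on the $f_2$-fiber through $Q$, which therefore has infinitely many $K$-rational points. These fibers are pairwise distinct since $f_2|_{C_1}$ is non-constant, and Zariski density of $V(K)$ in $V$ follows exactly as in the main theorem.

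The main obstacle is to verify that the set of bad fibers is a fixed finite set, independent of $K$: these are the $f_i$-fibers $F \subset V_0$ that are components of some $T_{k,j}$ for $\{i,j\}=\{1,2\}$ and some $k \ge 3$. If such an $F$ exists, then since $f_j|_F$ is non-constant (as $F$ is an $f_i$-fiber, not an $f_j$-fiber), Silverman's specialization theorem forces the assignment $Q \mapsto (e_j(Q))-(Q)$ on $F$ to come from a torsion section of order $k$ in the Mordell-Weil group of the generic fiber of $f_j$ over $\Qbar(t)$. Since that torsion group is finite (equal to $(\Z/4\Z)^2$ after base change to $\Q(i)(t)$, by the remark following Proposition \ref{fourtorsion}), only finitely many $k$ and only finitely many corresponding $F$ arise; in particular $k \le 4$, and the bad fibers can be enumerated explicitly on $V_0$.
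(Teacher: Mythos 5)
Your overall skeleton (replace Mazur by Merel, find one point where $(e_1(P))-(P)$ is non-torsion, iterate along $C_1$, then pass to the $f_2$-fibres) matches the paper's, but the argument rests on a claim about the loci $T_{ki}$ that is false. You assert that $T_{ki}$ has dimension at most $0$ for all sufficiently large $k$, citing finiteness of the Mordell--Weil torsion of the generic fibre. That is a non sequitur: $(e_i(P))-(P)$ is not a section of $f_i$. By Corollary \ref{hyp} we have $(e_i(S))-(S)\sim H-4(S)$ on a smooth fibre $C$, so the condition that this class have order dividing $k$ is $4k(S)\sim kH$, which has exactly $16k^2$ solutions on \emph{every} smooth fibre (compare the count of $64$ points in the proof of Proposition \ref{ordtwo} for $k=2$). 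Hence $T_{ki}$ is a nonempty curve --- a multisection of $f_i$ of degree growing like $k^2$ --- for every $k$, not just for $k\le 4$. This undermines the two statements your proof actually needs: (a) that $T_{k_1,1}\cap T_{k_2,2}$ is $0$-dimensional for all $3\le k_1,k_2\le M(K)$ (the paper verifies this by computer only for $k\in\{3,4\}$ in Proposition \ref{notbothtor}; for larger $k$ two such curves could a priori share a component, in which case your $n$ is not even finite); and (b) that the ``bad'' fibres --- $f_1$-fibres contained in some $T_{k,2}$ --- occur only for $k\le 4$, which is what makes your $U$ independent of $K$ and lets the iterates $e_1^m(P)$ escape every $T_{k,2}$. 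Your appeal to Silverman specialization for (b) again treats a degree-$8$ multisection (note $C_1\cdot C_2=8$) as if it were a section; after base change to the function field of an $f_1$-fibre the torsion of the generic fibre of $f_2$ can exceed $(\Z/4\Z)^2$, so nothing you say rules out an $f_1$-fibre sitting inside $T_{k,2}$ for some $k\ge 5$ depending on $K$.

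The paper's proof is designed precisely to avoid analysing $T_{ki}$ for large $k$. It removes from $U$ only the loci where the order lies in $I=\{1,\dots,12,14,15\}$, the set of $N$ for which $X_1(N)$ has genus at most $1$; this makes $U$ manifestly independent of $K$. For orders $N\notin I$ up to the Merel bound $B$ it studies no loci on the surface at all: by Faltings applied to $X_1(N)$ there are at most $s=\sum_{N\le B,\,N\notin I}\#X_1(N)(K)$ geometric isomorphism classes of elliptic curves over $K$ carrying such torsion, and since the $j$-invariant map of each fibration has degree $24$, at most $24s$ fibres of each $f_i$ can be involved; taking $n=24sB^2$ then works by pigeonhole on fibres. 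To repair your argument you would have to prove (a) and (b) for all $k$ up to the Merel bound, which the paper does not attempt; otherwise you should switch to the modular-curve count.
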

\begin{proof}
For each $P \in V$, let $o_i(P)\in \{1,2,3,\ldots\}\cup \{\infty\}$ 
denote the order of $e_i(P)$ 
on the fibre of $f_i$ through $P$ with $P$ as origin. We refer 
to $o_i(P)$ as the order of $e_i(P)$.

Recall that for any positive integer $N$, the curve $X_1(N)$ 
parametrizes pairs $(E,P)$, where $E$ is an elliptic curve
and $P$ is a point of order $N$. The genus of $X_1(N)$ is at 
least $2$ for $N=13$ and $N \geq 16$ (see \cite[p. 109]{oggrat}). 
For the remaining $N$, i.e., $N \in I:=\{1,\ldots,12,14,15\}$, and 
$i \in \{1,2\}$, 
let $T_{i,N}$ be the closure of the locus of all points $P$ on 
$V_{1,1,1,1}$ such that $o_i(P)=N$. Let $U \subset V_{1,1,1,1}$ be the 
complement of the $T_{i,N}$, so that for all $P \in U$ we have 
$o_i(P) \not \in I$. 

Suppose $K$ is a number field. By Merel's Theorem \cite[Corollaire]{merel},
there is an integer $B$, depending in fact only on the degree of $K$, 
such that any $K$-rational point of finite order on an elliptic curve 
over $K$ has order at most $B$. Set 
$$
s = \mathop{\sum_{N \leq B}}_{N \not \in I} \# X_1(N)(K).
$$
Note that $s$ is well defined because for each $N$ in the sum, the genus 
of $X_1(N)$ is at least $2$, so $\# X_1(N)(K)$ is finite by Faltings' 
Theorem. We conclude that up to isomorphism over the algebraic closure 
of $K$, there are at most $s$ 
elliptic curves over $K$ containing a point of finite order 
$N  \not \in I$. 

Take $a,b,c,d \in K^*$ with $abcd \in {K^*}^2$, and let $f_1,f_2$ be the 
elliptic fibrations of $V=V_{a,b,c,d}$ over $K$ as before. 
It is easy to check that the degree of the maps 
$j_i \colon \P^1 \rightarrow \P^1$ that send $t \in \P^1$ to the 
$j$-invariant of the fibre $f_i^{-1}(t)$ equals $24$.
Therefore there are at most $24s$ fibres of $f_i$, defined over $K$, of which 
the Jacobian contains a point over $K$ of finite order $N  \not \in I$. 
%
%
Take $n=24sB^2$ and assume $U_{a,b,c,d}=\iota_{a,b,c,d}^{-1}(U)$ 
contains more than $n$ points over $K$. Suppose that no fibre of $f_1$ 
contains more than $B^2$ points of $U_{a,b,c,d}(K)$. 
Then there would at least be one point 
$P\in U_{a,b,c,d}(K)$ on a fibre of $f_1$, say $C$, such that all 
$K$-rational torsion points on the Jacobian of $C$ have order in $I$.
From $P \in U_{a,b,c,d}$ we derive $o_i(P)\not \in I$, so $e_i(P)$
has infinite order and $C$ has infinitely many rational points. 
We conclude that there is a fibre of $f_1$, say $C_1$, with more 
than $B^2$ points of $U_{a,b,c,d}(K)$. By Merel's Theorem, at least 
one of these points has infinite order, so that there are infinitely 
many $K$-rational points on $C_1$. 

As $C_1$ intersects $U_{a,b,c,d}$ nontrivially, infinitely many of these 
rational points $Q$ lie in $U_{a,b,c,d}$, thus satisfying $o_2(Q) 
\not \in I$. Since at most $n$ points $Q$ on $V$ 
have finite order $o_2(Q) \not \in I$ on the fiber of $f_2$ through $Q$, 
we get $o_2(Q) = \infty$ for infinitely many rational $Q$ on $C_1$. 
It follows that infinitely many fibres of $f_2$ contain infinitely 
many rational points, so the set of rational points is Zariski dense. 
\end{proof}

\small
\nocite{*}
\bibliography{abcd}
\bibliographystyle{plain}

\end{document}